\definecolor{darkblue}{rgb}{0.0, 0.0, 0.55}
\definecolor{bordeaux}{rgb}{0.34, 0.01, 0.1}
\newtheorem{theorem}{Theorem}[section]
\newtheorem{example}[theorem]{Example}
\newtheorem{proposition}[theorem]{Proposition}
\newtheorem{remark}[theorem]{Remark}
\numberwithin{equation}{section}
\def\Z{{\mathbb{Z}}}
\def\R{{\mathbb{R}}}
\def\C{{\mathbb{C}}}
\def\N{{\mathbb{N}}}
\def\M{{\mathbf{M}}}
\def\x{{\mathbf{x}}}
\def\z{{\mathbf{z}}}
\def\y{{\mathbf{y}}}
\def\o{{\mathbf{o}}}
\def\a{{\boldsymbol{\alpha}}}
\def\b{{\boldsymbol{\beta}}}
\def\g{{\boldsymbol{\gamma}}}
\newcommand{\br}{\mathbf{r}}
\def\A{{\mathscr{A}}}
\def\B{{\mathscr{B}}}
\def\CC{{\mathscr{C}}}
\def\H{{\mathbf{H}}}
\def\i{\hbox{\bf{i}}}
\def\supp{\hbox{\rm{supp}}}
\def\int{\hbox{\rm{int}}}
\def\st{\hbox{\rm{s.t.}}}
\newcommand{\vge}{\mathbin{\rotatebox[origin=c]{90}{$\ge$}}}
\newif\ifcomment
\newcommand{\revision}[1]{{{\color{black}#1}}}
\begin{document}


\title[Exploiting Sparsity in Complex Polynomial Optimization]{Exploiting Sparsity in Complex Polynomial Optimization}

\author[J. Wang]{Jie Wang}
\address{Academy of Mathematics and Systems Science, CAS}
\email{wangjie212@amss.ac.cn}
\urladdr{https://wangjie212.github.io/jiewang}

\author[V. Magron]{Victor Magron}
\address{Laboratory for Analysis and Architecture of Systems, CNRS}
\email{vmagron@laas.fr}
\urladdr{https://homepages.laas.fr/vmagron}

\subjclass[2020]{90C23,14P10,90C22,90C26,12D15}
\keywords{complex moment-HSOS hierarchy, correlative sparsity, term sparsity, complex polynomial optimization, optimal power flow}

\date{\today}

\begin{abstract}
In this paper, we study the sparsity-adapted complex moment-Hermitian sum of squares (moment-HSOS)  hierarchy for complex polynomial optimization problems, where the {\em sparsity} includes correlative sparsity and term sparsity. We compare the strengths of the sparsity-adapted complex moment-HSOS hierarchy with the sparsity-adapted real moment-SOS hierarchy on either randomly generated complex polynomial optimization problems or the AC optimal power flow problem. The results of numerical experiments show that the sparsity-adapted complex moment-HSOS hierarchy provides a trade-off between the computational cost and the quality of obtained bounds for large-scale complex polynomial optimization problems.
\newline
\newline
$^\star$ Communicated by Vaithilingam Jeyakumar
\end{abstract}

\maketitle

\section{Introduction}
In this paper, we consider the following complex polynomial optimization problem (CPOP):
\begin{equation}\label{cpop}
(\textrm{Q}):\quad
\begin{cases}
\inf_{\z\in\C^n} &f(\z,\bar{\z})\coloneqq\sum_{\a,\b}f_{\a,\b}\z^{\a}\bar{\z}^{\b}\\
\textrm{s.t.}&g_j(\z,\bar{\z})\coloneqq\sum_{\a,\b}g_{j,\a,\b}\z^{\a}\bar{\z}^{\b}\ge0,\quad j=1,\ldots,m,\\
&h_i(\z,\bar{\z})\coloneqq\sum_{\a,\b}h_{i,\a,\b}\z^{\a}\bar{\z}^{\b}=0,\quad i=1,\ldots,t,
\end{cases}
\end{equation}
where $n$, $m$, and $t$ are positive integers,  $\bar{\z}\coloneqq(\bar{z}_1,\ldots,\bar{z}_n)$ stands for the conjugate of complex variables $\z\coloneqq(z_1,\ldots,z_n)$. The functions $f,g_1,\ldots,g_m,h_1,\ldots,h_t$ are real-valued polynomials and their coefficients satisfy $f_{\a,\b}=\bar{f}_{\b,\a}$, $g_{j,\a,\b}= \bar{g}_{j,\b,\a}$, and $h_{i,\a,\b}= \bar{h}_{i,\b,\a}$. The feasible set is defined as $\{\z\in\C^n\mid g_j(\z, \bar{\z}) \ge 0, j=1,\ldots,m,h_i(\z, \bar{\z}) = 0, i=1,\ldots,t\}$. For the sake of brevity, we assume that there are only inequality constraints in \eqref{cpop} for the rest of this paper.
CPOP \eqref{cpop} arises naturally from diverse areas, such as imaging science \cite{fogel2016phase}, signal processing \cite{aittomaki2009,aubry2013ambiguity,mariere2003}, automatic control \cite{toker1998complexity}, quantum mechanics \cite{hilling2010}, optimal power flow \cite{bienstock2020}. 
By introducing real variables for the real part and the imaginary part of each complex variables respectively, CPOP \eqref{cpop} can be converted into a polynomial optimization problem (POP) involving only real variables.

\revision{The moment-sum of squares (SOS) hierarchy (also known as Lasserre's hierarchy) \cite{Las01}, which consists of a sequence of increasingly tight semidefinite relaxations, has become a popular tool to retrieve global optimal values of POPs involving real variables. For convenience, we refer to the moment-SOS hierarchy as the ``real hierarchy'' in this paper. The real hierarchy can be further adapted as the moment-Hermitian sum of squares (HSOS) hierarchy to handle CPOPs \cite{josz2018lasserre}, which thereby we refer to as the ``complex hierarchy'' in this paper. Notice that for a CPOP, one can either apply the real hierarchy after converting it into a real POP or apply the complex hierarchy directly. It is known that the complex hierarchy never produces tighter bounds than the real hierarchy when using the same relaxation order, which is, however, still of interest because of its lower computational complexity as recently shown in \cite{josz2018lasserre}.}

On the other hand, due to the rapidly growing size of semidefinite relaxations, the standard real and complex hierarchies are typically solvable only for modest-size problems on a personal computer.
To improve their scalability, it is then crucial to exploit the structure, e.g., sparsity, encoded in the problem data. For the real hierarchy, one can make use of the well-known correlative sparsity \cite{waki}, or the newly proposed term sparsity \cite{tssos2,tssos1}, or the combination of both \cite{tssos3}. 
Exploiting sparsity in the real hierarchy has been successfully done for many practical applications, including computer arithmetic \cite{toms17,toms18}, control \cite{vreman2021stability,wang2020sparsejsr}, machine learning \cite{chen2020polynomial}, noncommutative optimization \cite{klep2021sparse,nctssos,zhou2020proper,zhou2020fairness}, rational function optimization \cite{bugarin2016minimizing}, just to name a few. 
For the complex hierarchy, one can also make use of correlative sparsity; see \cite{josz2018lasserre} for an application to the AC optimal power flow problem. The main purpose of this paper is to develop sparsity-adapted complex hierarchies by taking into account term sparsity as for the real hierarchy.

{\bf Contribution.} Our contributions are threefold:
\begin{enumerate}
    \item[1)] We propose sparsity-adapted complex hierarchies based on either term sparsity or correlative-term sparsity for CPOPs, which are indexed by two parameters: the relaxation order $d$ and the sparse order $k$. The optima of the sparsity-adapted complex hierarchies for a fixed $d$ are proved to converge to the optimum of the dense relaxation or the relaxation exploiting only correlative sparsity with the same relaxation order when the maximal chordal extension is chosen. We also prove that the block structure arising in the sparsity-adapted complex hierarchies is always a refinement of the block structure determined by the sign symmetries of the problem.
    \item[2)] We propose a minimum initial relaxation step of the sparsity-adapted complex hierarchy. For the AC optimal power flow problem, this new relaxation is able to provide a tighter lower bound than Shor's relaxation and is, \revision{depending on the input}, possibly much less expensive than the second order relaxation of the complex hierarchy.
    \item[3)] We provide a comprehensive comparison on the strengths of the sparsity-adapted real hierarchy and the sparsity-adapted complex hierarchy for CPOPs via numerical experiments. The largest numerical example is an instance of the AC optimal power flow problem involving $2869$ complex variables (or $5738$ real variables).
\end{enumerate}

Our sparsity-adapted complex hierarchies can be viewed as complex variants of the ones obtained for the real case \cite{tssos2,tssos1,tssos3}. 
Throughout the paper, we emphasize in several places the subtle differences between the complex and real settings, in particular to define sparsity-adapted moment/localizing matrices and the connection with sign symmetries when (finite) convergence occurs. 
We also hope that it is of interest for researchers solving large-scale CPOPs (e.g., AC optimal power flow problems) to have a self-contained paper explaining in detail the construction of term sparsity pattern graphs, as well as the sparsity-adapted semidefinite formulations.
Last but not least, we do not pretend that the sparsity-adapted complex hierarchy systematically provides better results than the real hierarchy.
It rather provides a trade-off between efficiency and accuracy for large-scale CPOPs.

The rest of the paper is organized as follows. In Section \ref{preliminaries}, we recall some preliminary background. In Section \ref{hierarchies}, we establish the sparsity-adapted complex hierarchies and prove some of their properties. In Section \ref{initial}, a minimum initial relaxation step for the sparsity-adapted complex hierarchy is introduced.
Numerical experiments are provided in Section \ref{experiments} and conclusions are provided in Section \ref{cons}.

\section{Notation and preliminaries}\label{preliminaries}
Let $\N$ be the set of nonnegative integers.
For $n\in\N\backslash\{0\}$, let $[n]\coloneqq\{1,2,\ldots,n\}$.
Let $\i$ be the imaginary unit, satisfying $\i^2 = -1$.
Let $\z=(z_1,\ldots,z_n)$ (resp. $\x=(x_1,\ldots,x_n)$) be a tuple of complex (resp. real) variables and $\bar{\z}=(\bar{z}_1,\ldots,\bar{z}_n)$ its conjugate.
We denote by $\C[\z]\coloneqq\C[z_1,\ldots,z_n]$, $\C[\z,\bar{\z}]\coloneqq\C[z_1,\ldots,z_n,\bar{z}_1,\ldots,\bar{z}_n]$, $\R[\x]\coloneqq\R[x_1,\ldots,x_n]$ the complex polynomial ring in $\z$, the complex polynomial ring in $\z,\bar{\z}$, the real polynomial ring in $\x$, respectively. For $d\in\N$, let $\C_{d}[\z]$ (resp. $\C_{d}[\z,\bar{\z}]$) denote the set of polynomials in $\C[\z]$ (resp. $\C[\z,\bar{\z}]$) of degree no greater than $d$. A polynomial $f\in\C[\z,\bar{\z}]$ can be written as $f=\sum_{(\b,\g)\in\A}f_{\b,\g}\z^{\b}\bar{\z}^{\g}$ with $\A\subseteq\N^n\times\N^n$ and $f_{\b,\g}\in\C, \z^{\b}=z_1^{\beta_1}\cdots z_n^{\beta_n},\bar{\z}^{\g}=\bar{z}_1^{\gamma_1}\cdots \bar{z}_n^{\gamma_n}$. The support of $f$ is defined by $\supp(f)=\{(\b,\g)\in\A\mid f_{\b,\g}\ne0\}$. The conjugate of $f$ is $\bar{f}=\sum_{(\b,\g)\in\A}\bar{f}_{\b,\g}\z^{\g}\bar{\z}^{\b}$. A polynomial $\sigma=\sum_{(\b,\g)}\sigma_{\b,\g}\z^{\b}\bar{\z}^{\g}\in\C_{2d}[\z,\bar{\z}]$ is called a {\em Hermitian sum of squares} or an {\em HSOS} for short if there exist polynomials $f_i\in\C_{d}[\z], i\in[t]$ such that $\sigma=\sum_{i=1}^tf_i\bar{f}_i$. We will use $|\cdot|$ to denote the cardinality of a set. For $(\b,\g)\in\N^n\times\N^n$, $\A\subseteq\N^n\times\N^n$, let $(\b,\g)+\A\coloneqq\{(\b+\b',\g+\g')\mid(\b',\g')\in\A\}$.

For a positive integer $r$, the set of $r\times r$ Hermitian matrices is denoted by $\H^r$ and the set of $r\times r$ positive semidefinite (PSD) Hermitian matrices is denoted by $\H_+^r$. Let $A\circ B\in\H^r$ denote the Hadamard product of $A,B\in\H^r$, defined by $[A\circ B]_{ij} = A_{ij}B_{ij}$.
For $d\in\N$, let $\N^n_d\coloneqq\{(\alpha_i)_{i}\in\N^n\mid\sum_{i=1}^n\alpha_i\le d\}$ (sorted with respect to the lexicographic order). The set $\{\z^{\b}\mid\b\in\N^n_d\}$ is called the standard (complex) {\em monomial basis} up to degree $d$. For the sake of convenience, we abuse notation slightly in this paper and use the exponent set $\N^n_{d}$ to denote the monomial basis. 

\subsection{The complex moment-HSOS hierarchy}\label{complexsos}
Let $\y=(y_{\a})_{\a\in\N^n}\in\R^{\N^n}$ be a sequence indexed by $\a\in\N^n$. Let $L^{\textrm{r}}_{\y}:\R[\x]\rightarrow\R$ be the linear functional
\begin{equation*}
f=\sum_{\a}f_{\a}\x^{\a}\mapsto L^{\textrm{r}}_{\y}(f)=\sum_{\a}f_{\a}y_{\a}.
\end{equation*}
The {\em real moment} matrix $\M^{\textrm{r}}_{d}(\y)$ ($d\in\N$) associated with $\y$ is the matrix with rows and columns indexed by $\N^n_{d}$ such that
\begin{equation*}
\M^{\textrm{r}}_d(\y)_{\b\g}\coloneqq L^{\textrm{r}}_{\y}(\x^{\b}\x^{\g})=y_{\b+\g}, \quad\forall\b,\g\in\N^n_{d}.
\end{equation*}
Suppose $g=\sum_{\a}g_{\a}\x^{\a}\in\R[\x]$. The {\em real localizing} matrix $\M^{\textrm{r}}_{d}(g\y)$ associated with $g$ and $\y$ is the matrix with rows and columns indexed by $\N^n_{d}$ such that
\begin{equation*}
\M^{\textrm{r}}_{d}(g\,\y)_{\b\g}\coloneqq L^{\textrm{r}}_{\y}(g\,\x^{\b}\x^{\g})=\sum_{\a}g_{\a}y_{\a+\b+\g}, \quad\forall\b,\g\in\N^n_{d}.
\end{equation*}

Now let $\y=(y_{\b,\g})_{(\b,\g)\in\N^n\times\N^n}\in\C^{\N^n\times\N^n}$ be a sequence indexed by $(\b,\g)\in\N^n\times\N^n$ and satisfies $y_{\b,\g}=\bar{y}_{\g,\b}$. Let $L^{\textrm{c}}_{\y}:\C[\z,\bar{\z}]\rightarrow\R$ be the linear functional
\begin{equation*}
f=\sum_{(\b,\g)}f_{\b,\g}\z^{\b}\bar{\z}^{\g}\mapsto L^{\textrm{c}}_{\y}(f)=\sum_{(\b,\g)}f_{\b,\g}y_{\b,\g}.
\end{equation*}
The {\em complex moment} matrix $\M^{\textrm{c}}_{d}(\y)$ ($d\in\N$) associated with $\y$ is the matrix with rows and columns indexed by $\N^n_{d}$ such that
\begin{equation*}
\M^{\textrm{c}}_d(\y)_{\b\g}\coloneqq L^{\textrm{c}}_{\y}(\z^{\b}\bar{\z}^{\g})=y_{\b,\g}, \quad\forall\b,\g\in\N^n_{d}.
\end{equation*}
Suppose that $g=\sum_{(\b',\g')}g_{\b',\g'}\z^{\b'}\bar{\z}^{\g'}\in\C[\z,\bar{\z}]$ is a Hermitian polynomial, i.e., $\bar{g}=g$. The {\em complex localizing} matrix $\M^{\textrm{c}}_{d}(g\y)$ associated with $g$ and $\y$ is the matrix with rows and columns indexed by $\N^n_{d}$ such that
\begin{equation*}
\M^{\textrm{c}}_{d}(g\,\y)_{\b\g}\coloneqq L^{\textrm{c}}_{\y}(g\,\z^{\b}\bar{\z}^{\g})=\sum_{(\b',\g')}g_{\b',\g'}y_{\b+\b',\g+\g'}, \quad\forall\b,\g\in\N^n_{d}.
\end{equation*}
Both the complex moment matrix and the complex localizing matrix are Hermitian matrices.

Note that a distinguished difference between the real moment matrix and the complex moment matrix is that the former has the Hankel property (i.e., $\M^{\textrm{r}}_{d}(\y)_{\b\g}$ is a function of $\b+\g$), whereas the latter does not have.

There are two ways to construct a ``moment-SOS'' hierarchy for CPOP \eqref{cpop}. The first way is introducing real variables for both real and imaginary parts of each complex variable in \eqref{cpop}, i.e., letting $z_i=x_i+x_{i+n}\i$ for $i\in[n]$. Then one can convert CPOP \eqref{cpop} to a POP involving only real variables at the price of doubling the number of variables. Therefore the usual real moment-SOS hierarchy applies to the resulting real POP. In order to improve scalability, correlative and term sparsity can be exploited to yield sparsity-adapted hierarchies \cite{waki,tssos2,tssos1,tssos3}.

On the other hand, as the second way, it might be advantageous to handle CPOP \eqref{cpop} directly with the complex moment-HSOS hierarchy introduced in \cite{josz2018lasserre}.
Let $d_f\coloneqq\max\{|\a|,|\b|:f_{j,\a,\b}\ne0\}$, $d_j\coloneqq\max\{|\a|,|\b|:g_{j,\a,\b}\ne0\},j\in[m]$, and let $d_{\min}\coloneqq\max\{d_f,d_1,\ldots,d_m\}$. Then the complex moment hierarchy indexed by $d\ge d_{\min}$ (called the relaxation order) for CPOP \eqref{cpop} is given by
\begin{equation}\label{sec2-eq1}
(\textrm{Q}_{d}):\quad
\begin{cases}
\inf& L^{\textrm{c}}_{\y}(f)\\
\textrm{s.t.}&\M^{\textrm{c}}_{d}(\y)\succeq0,\\
&\M^{\textrm{c}}_{d-d_j}(g_j\y)\succeq0,\quad j\in[m],\\
&y_{\mathbf{0},\mathbf{0}}=1,
\end{cases}
\end{equation}
which is a semidefinite program (SDP) with optimum denoted by $\rho_d$. The dual of $(\textrm{Q}_{d})$ \eqref{sec2-eq1} can be formulized as the following HSOS relaxation:
\begin{equation}\label{sec2-eq2}
(\textrm{Q}_{d})^*:\quad
\begin{cases}
\sup&\rho\\
\textrm{s.t.}&f-\rho=\sigma_0+\sigma_1g_1+\ldots+\sigma_mg_m,\\
&\sigma_j\textrm{ is an HSOS},\quad j=0,\ldots,m,\\
&\deg(\sigma_0)\le2d,\deg(\sigma_jg_j)\le2d,\quad j\in[m].
\end{cases}
\end{equation}

\begin{remark}
In \eqref{sec2-eq1}, the expression ``$X\succeq0$" means an Hermitian matrix $X$ to be positive semidefinite. Since popular SDP solvers deal with only real SDPs, it is necessary to convert this condition to a condition involving only real matrices. This can be done by introducing the real part $A$ and the imaginary part $B$ of $X$ respectively such that $X=A+B\i$. Then,
\begin{equation*}
    X\succeq0\quad \iff \quad\begin{bmatrix}A&-B\\B&A
    \end{bmatrix}\succeq0.
\end{equation*}
\end{remark}

\begin{remark}
The first order moment-(H)SOS relaxation for quadratically constrained quadratic programs (QCQP) is also known as Shor's relaxation. It was proved in \cite{josz2015moment} that the real Shor's relaxation and the complex Shor's relaxation for homogeneous QCQPs yield the same bound. However, generally the complex hierarchy is weaker \revision{(i.e., producing looser bounds)} than the real hierarchy at the same relaxation order $d>1$ \revision{as Hermitian sums of squares are a special case of real sums of squares; see \cite{josz2018lasserre}}.
\end{remark}

\begin{remark}
By the complex Positivstellensatz theorem due to D’Angelo and Putinar \cite{d2009polynomial}, global convergence of the complex hierarchy is guaranteed when a sphere constraint is present.
\end{remark}

\revision{\begin{remark}
If CPOP \eqref{cpop} is feasible, then the program \eqref{sec2-eq1} is always feasible for any $d\ge d_{\min}$ as one can take the Dirac measure centering a feasible point of \eqref{cpop} which leads to a feasible point of \eqref{sec2-eq1}. Moreover, if we further assume that there is a ball/sphere constraint (alternatively, multi-ball/sphere constraints) in terms of all variables in \eqref{cpop}, then by a similar argument as for Proposition 5.8 of \cite{marshall2009}, we can show that the feasible set of \eqref{sec2-eq2} is nonempty. So the optimum of \eqref{sec2-eq1} is bounded from below by weak duality. In addition, by Lemma 3.6 of \cite{josz2015moment}, the strong duality also holds in this case.
\end{remark}}

\subsection{Sparse matrices and chordal graphs}
In this subsection, we briefly revisit the relationship between sparse matrices and chordal graphs, which is crucial for the sparsity-exploitation of this paper. For more details on sparse matrices and chordal graphs, the reader is referred to the survey \cite{va}.

An (undirected) {\em graph} $G(V,E)$ or simply $G$ consists of a set of nodes $V$ and a set of edges $E\subseteq\{\{v_i,v_j\}\mid (v_i,v_j)\in V\times V\}$. 
When $G$ is a graph, we also use $V(G)$ and $E(G)$ to indicate the node set of $G$ and the edge set of $G$, respectively. 
The {\em adjacency matrix} of $G$ is denoted by $B_G$ for which we put ones on positions corresponding to edges of $G$ as well as its diagonal and put zeros otherwise. For two graphs $G,H$, we say that $G$ is a {\em subgraph} of $H$ if $V(G)\subseteq V(H)$ and $E(G)\subseteq E(H)$, denoted by $G\subseteq H$. A graph is called a {\em chordal graph} if all its cycles of length at least four have a chord\footnote{By a chord, we means an edge that joins two nonconsecutive nodes in a cycle.}. Any non-chordal graph $G(V,E)$ can always be extended to a chordal graph $\overline{G}(V,\overline{E})$ by adding appropriate edges to $E$, which is called a {\em chordal extension} of $G(V,E)$. A {\em clique} $C\subseteq V$ of $G$ is a subset of nodes where $\{v_i,v_j\}\in E$ for any $v_i,v_j\in C$. If a clique $C$ is not a subset of any other clique, then it is called a {\em maximal clique}. It is known that maximal cliques of a chordal graph can be enumerated efficiently in linear time in the number of nodes and edges of the graph \cite{bp}.

\begin{remark}
For a graph $G$, we denote any specific chordal extension of $G$ by $\overline{G}$. The chordal extension of $G$ is generally not unique. In this paper, we will consider two particular types of chordal extensions: {\em the maximal chordal extension} and {\em approximately smallest chordal extensions}. By the maximal chordal extension, we refer to the chordal extension that completes every connected component of $G$. A chordal extension with the smallest clique number is called a {\em smallest chordal extension}. Computing a smallest chordal extension of a graph is an NP-complete problem in general. Fortunately, several heuristic algorithms, e.g., the greedy minimum degree and the greedy minimum fill-ins, are known to efficiently produce a good approximation; see \cite{treewidth} for more detailed discussions. Throughout the paper, we assume that for graphs $G,H$,
\begin{equation}\label{assum}
G\subseteq H\Longrightarrow \overline{G}\subseteq\overline{H}.
\end{equation}
This assumption is reasonable since any chordal extension of $H$ restricting to $G$ is also a chordal extension of $G$.
\end{remark}

Given a graph $G(V,E)$, a Hermitian matrix $Q$ indexed by $V=[n]$ is said to have sparsity pattern $G$ if $Q_{ij}=Q_{ji}=0$ whenever $i\ne j$ and $\{i,j\}\notin E$, i.e., $B_G\circ Q=Q$. Let $\H_G$ be the set of Hermitian matrices with sparsity pattern $G$.
A matrix in $\H_G$ exhibits a {\em block structure}. Each block corresponds to a maximal clique of $G$. The maximal block size is the maximal size of maximal cliques of $G$, namely, the \emph{clique number} of $G$. Note that there might be overlaps between blocks because different maximal cliques may share nodes.

Given a maximal clique $C$ of $G(V,E)$, we define a matrix $P_{C}\in \R^{|C|\times |V|}$ by
\begin{equation}\label{sec2-eq6}
[P_{C}]_{ij}=\begin{cases}
1, &\textrm{if }C(i)=j,\\
0, &\textrm{otherwise},
\end{cases}
\end{equation}
where $C(i)$ denotes the $i$-th node in $C$, sorted in the ordering compatible with $V$. 
Note that $P_{C}QP_{C}^T\in \H^{|C|}$ extracts a principal submatrix defined by the indices in the clique $C$ from a Hermitian matrix $Q$, and $P_{C}^TQ_{C}P_{C}$ inflates a $|C|\times|C|$ matrix $Q_{C}$ into a sparse $|V|\times |V|$ matrix.

The PSD matrices with sparsity pattern $G$ form a convex cone
\begin{equation}\label{sec2-eq5}
\H_+^{|V|}\cap\H_G=\{Q\in\H_G\mid Q\succeq0\}.
\end{equation}
When the sparsity pattern graph $G$ is chordal, the cone $\H_+^{|V|}\cap\H_G$ can be
decomposed as a sum of simple convex cones, as stated in the following theorem.
\begin{theorem}[\cite{agler}, Theorem 2.3]\label{sec2-thm}
Let $G(V,E)$ be a chordal graph and assume that $\{C_1,\ldots,C_t\}$ is the list of maximal cliques of $G(V,E)$. Then a matrix $Q\in\H_+^{|V|}\cap\H_G$ if and only if there exist $Q_{k}\in \H_+^{|C_k|}$ for $k=1,\ldots,t$ such that $Q=\sum_{k=1}^tP_{C_k}^TQ_{k}P_{C_k}$.
\end{theorem}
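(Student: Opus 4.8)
The plan is to prove both implications, the easy one first. For the ``if'' direction, suppose $Q=\sum_{k=1}^tP_{C_k}^TQ_kP_{C_k}$ with each $Q_k\in\H_+^{|C_k|}$. Each summand $P_{C_k}^TQ_kP_{C_k}$ is positive semidefinite, since for every $v\in\C^{|V|}$ one has $v^*P_{C_k}^TQ_kP_{C_k}v=(P_{C_k}v)^*Q_k(P_{C_k}v)\ge0$; moreover its only nonzero entries sit in positions $(\b,\g)$ with $\b,\g\in C_k$, hence on the diagonal or in $E$ because $C_k$ is a clique. Summing over $k$ yields $Q\in\H_+^{|V|}\cap\H_G$.

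For the ``only if'' direction I would argue by induction on $|V|$, using that a chordal graph has a simplicial vertex. The base case $|V|=1$ is trivial. For the step, pick a simplicial vertex $v$, so that $N:=N_G(v)$ is a clique and $C_1:=\{v\}\cup N$ is a maximal clique of $G$. Ordering the indices so that $v$ comes first, write
\begin{equation*}
Q=\begin{pmatrix} a & b^* \\ b & Q' \end{pmatrix},
\end{equation*}
where $a=Q_{vv}\ge0$ and, since $Q\in\H_G$ and $v$ is simplicial, the vector $b$ is supported on $N$. If $a=0$, positive semidefiniteness forces $b=0$; then $Q'\in\H_{G-v}$, $G-v$ is chordal, and applying the induction hypothesis and adjoining a zero block for $C_1$ finishes this case. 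If $a>0$, form the Hermitian Schur complement $R:=Q'-\tfrac1a bb^*$, which is positive semidefinite and, since $\tfrac1abb^*$ is supported on the clique $N\times N$, still lies in $\H_{G-v}$.

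It remains to track the maximal cliques. One checks that every maximal clique of $G$ other than $C_1$ is again a maximal clique of $G-v$, and that the maximal cliques of $G-v$ are exactly these together with $N$ whenever $N$ is not contained in another maximal clique of $G$. Apply the induction hypothesis to decompose $R$ over the maximal cliques of $G-v$, inflate each block back to $\H^{|V|}$ by padding with zeros in the $v$-row and $v$-column, and redistribute: a block indexed by $N$ (if present) is re-inflated to the larger clique $C_1$, and the remaining blocks are already indexed by maximal cliques of $G$. Finally add the rank-one positive semidefinite matrix
\begin{equation*}
\frac1a\begin{pmatrix} a \\ b' \end{pmatrix}\begin{pmatrix} a \\ b' \end{pmatrix}^*,
\end{equation*}
supported on $C_1\times C_1$, where $b'$ is the restriction of $b$ to $N$; a direct computation shows it supplies precisely the $(v,v)$, $(v,N)$, $(N,v)$ entries and the removed term $\tfrac1abb^*$, so the resulting family of positive semidefinite blocks, indexed by the maximal cliques of $G$, sums to $Q$.

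The main obstacle is exactly this bookkeeping about how the list of maximal cliques changes when the simplicial vertex is deleted (in particular the case distinction on whether $N_G(v)$ is swallowed by another maximal clique), and verifying that the inflated, redistributed decomposition of the Schur complement lands on blocks indexed precisely by the maximal cliques of $G$. An alternative that streamlines this is to induct on the number of maximal cliques via a clique tree of $G$, peeling off a leaf clique and performing a single Schur-complement step on its separator with the parent clique; the Hermitian Schur-complement identity is the engine in either version, and nothing in the argument is sensitive to working over $\C$ rather than $\R$.
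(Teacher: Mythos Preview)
Your proof is correct. Note, however, that the paper does not supply its own proof of this theorem: it is quoted verbatim from \cite{agler} as background (Theorem~2.3 there), so there is no in-paper argument to compare against. The inductive Schur-complement argument you sketch, peeling off a simplicial vertex and tracking the clique list of $G-v$, is essentially the classical proof; your bookkeeping on how the maximal cliques of $G-v$ relate to those of $G$ (namely $C_2,\ldots,C_t$ together with $N$ when $N$ is not already contained in some $C_i$, $i\ge2$) is accurate, and the rank-one correction on $C_1$ indeed restores the removed entries. Nothing further is needed.
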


Given a graph $G(V,E)$, let $\Pi_{G}$ be the projection from $\H^{|V|}$ to the subspace $\H_G$, i.e., for $Q\in\H^{|V|}$,
\begin{equation}\label{sec2-eq7}
\Pi_{G}(Q)_{ij}=\begin{cases}
Q_{ij}, &\textrm{if }\{i,j\}\in E\textrm{ or }i=j,\\
0, &\textrm{otherwise}.
\end{cases}
\end{equation}

We denote by $\Pi_{G}(\H_+^{|V|})$ the set of matrices in $\H_G$ that have a PSD completion, i.e., 
\begin{equation}\label{sec2-eq8}
\Pi_{G}(\H_+^{|V|})=\{\Pi_{G}(Q)\mid Q\in\H_+^{|V|}\}.
\end{equation}

One can check that the PSD completable cone $\Pi_{G}(\H_+^{|V|})$ and the PSD cone $\H_+^{|V|}\cap\H_G$ form a pair of dual cones in $\H_G$; see \cite[Section 10.1]{va} for a proof. Moreover, for a chordal graph $G$, the decomposition result for the cone $\H_+^{|V|}\cap\H_G$ in Theorem \ref{sec2-thm} leads to the following characterization of the PSD completable cone $\Pi_{G}(\H_+^{|V|})$.
\begin{theorem}[\cite{grone1984}, Theorem 7]\label{sec2-thm2}
Let $G(V,E)$ be a chordal graph and assume that $\{C_1,\ldots,C_t\}$ is the list of maximal cliques of $G(V,E)$. Then a matrix $Q\in\Pi_{G}(\H_+^{|V|})$ if and only if $Q_{k}=P_{C_k}QP_{C_k}^T\succeq0$ for $k=1,\ldots,t$.
\end{theorem}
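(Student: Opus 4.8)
The plan is to read off Theorem~\ref{sec2-thm2} as the ``dual'' of Theorem~\ref{sec2-thm}, using the duality between $\Pi_{G}(\H_+^{|V|})$ and $\H_+^{|V|}\cap\H_G$ recorded just above. Write $\mathcal{K}_G$ for the cone on the right-hand side of the theorem,
\[
\mathcal{K}_G:=\{Q\in\H_G\mid P_{C_k}QP_{C_k}^{T}\succeq0\ \text{for}\ k=1,\ldots,t\},
\]
and let $\langle A,B\rangle:=\Tr(A^{*}B)$ denote the trace inner product on $\H_G$ (real-valued on Hermitian matrices, and equal to $\Tr(AB)$ there). Since $\Pi_{G}(\H_+^{|V|})$ and $\H_+^{|V|}\cap\H_G$ form a dual pair in $\H_G$, it is enough to prove the single identity $\mathcal{K}_G=(\H_+^{|V|}\cap\H_G)^{*}$, with the dual taken inside $\H_G$; the theorem then follows because $(\H_+^{|V|}\cap\H_G)^{*}=\Pi_{G}(\H_+^{|V|})$.

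For $\mathcal{K}_G\subseteq(\H_+^{|V|}\cap\H_G)^{*}$, take $Q\in\mathcal{K}_G$ and $S\in\H_+^{|V|}\cap\H_G$; by Theorem~\ref{sec2-thm} write $S=\sum_{k=1}^{t}P_{C_k}^{T}S_kP_{C_k}$ with each $S_k\in\H_+^{|C_k|}$, and compute
\[
\langle Q,S\rangle=\sum_{k=1}^{t}\langle Q,P_{C_k}^{T}S_kP_{C_k}\rangle=\sum_{k=1}^{t}\langle P_{C_k}QP_{C_k}^{T},S_k\rangle\ge0,
\]
using the adjointness identity $\langle Q,P_{C}^{T}SP_{C}\rangle=\langle P_{C}QP_{C}^{T},S\rangle$ (cyclicity of the trace) together with the fact that $P_{C_k}QP_{C_k}^{T}\succeq0$ and $S_k\succeq0$ pair nonnegatively. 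Conversely, for $(\H_+^{|V|}\cap\H_G)^{*}\subseteq\mathcal{K}_G$, take $S$ in the dual cone; for each clique $C_k$ and every $v\in\C^{|C_k|}$ the rank-one matrix $P_{C_k}^{T}(vv^{*})P_{C_k}=(P_{C_k}^{T}v)(P_{C_k}^{T}v)^{*}$ is PSD and supported on index pairs lying inside the clique $C_k$, hence belongs to $\H_+^{|V|}\cap\H_G$, so $0\le\langle S,P_{C_k}^{T}(vv^{*})P_{C_k}\rangle=v^{*}(P_{C_k}SP_{C_k}^{T})v$; as $v$ is arbitrary this forces $P_{C_k}SP_{C_k}^{T}\succeq0$, i.e.\ $S\in\mathcal{K}_G$. (The forward implication of the theorem is in any case immediate on its own: a principal submatrix of a PSD completion of $Q$ is PSD and, because $C_k$ is a clique, coincides with $P_{C_k}QP_{C_k}^{T}$.)

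The argument is routine; the only points needing care are the adjointness identity, the membership $P_{C_k}^{T}(vv^{*})P_{C_k}\in\H_G$ (which uses only that $C_k$ is a clique, not chordality), and the invocation of Theorem~\ref{sec2-thm}, the sole place where chordality enters. The one mild subtlety to keep in mind is that the relevant dual cone is taken inside the subspace $\H_G$, so that the dual of $\H_+^{|V|}\cap\H_G$ is the PSD-\emph{completable} cone $\Pi_{G}(\H_+^{|V|})$ rather than the full PSD cone. If one prefers a route that avoids the abstract duality altogether, one can use the original constructive proof of \cite{grone1984}: process $V$ along a perfect elimination ordering of $G$ and fill in the unspecified entries one at a time, each time choosing the new entry so that a Schur complement of the already-completed block stays PSD; chordality guarantees that at each step the previously completed neighbourhood involved is a clique, which is precisely what makes the Schur-complement completion feasible.
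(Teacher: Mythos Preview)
The paper does not supply its own proof of this statement; it quotes the result from \cite{grone1984} and only remarks, in the sentence preceding the theorem, that ``the decomposition result for the cone $\H_+^{|V|}\cap\H_G$ in Theorem~\ref{sec2-thm} leads to the following characterization.'' Your argument is correct and is precisely the derivation that sentence points to: you identify $\mathcal{K}_G$ with $(\H_+^{|V|}\cap\H_G)^{*}$ (one inclusion via Theorem~\ref{sec2-thm}, the other via clique-supported rank-one test matrices) and then invoke the duality $\Pi_G(\H_+^{|V|})=(\H_+^{|V|}\cap\H_G)^{*}$ that the paper records just above with a citation to \cite{va}. The only point to flag is that this last duality, taken as a black box, hides the fact that $\Pi_G(\H_+^{|V|})$ is closed (needed for the bidual to return the cone itself); the paper offloads that to \cite{va}, so within the paper's framework there is no gap. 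By contrast, the original proof in \cite{grone1984} that you sketch at the end is self-contained and constructive: it neither uses Theorem~\ref{sec2-thm} nor any abstract duality, but fills in the missing entries one at a time along a perfect elimination ordering via Schur complements. Your duality route is shorter once Theorem~\ref{sec2-thm} and the dual-pair fact are in hand; the Grone--Johnson--S\'a--Wolkowicz route is more elementary and actually produces a completion.
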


\section{Sparsity-adapted complex moment-HSOS hierarchies}\label{hierarchies}
In this section, we adapt the real moment-SOS hierarchies that exploit sparsity developed in \cite{waki,tssos2,tssos1,tssos3} to the complex case.

\subsection{Correlative sparsity}\label{cs}
The procedure to exploit correlative sparsity for the complex hierarchy consists of two steps: 1) partition the set of variables into subsets according to the correlations between variables emerging in the problem data, and 2) construct a sparse complex hierarchy with respect to the former partition of variables \cite{josz2018lasserre,waki}.

Let us discuss in more details. Consider the CPOP defined by \eqref{cpop}. Fix a relaxation order $d\ge d_{\min}$. Let $J'\coloneqq\{j\in[m]\mid d_j=d\}$. For $\b=(\beta_i)_i\in\N^n$, let $\supp(\b)\coloneqq\{i\in[n]\mid\beta_i\ne0\}$. We define the {\em correlative sparsity pattern (csp) graph} associated with CPOP \eqref{cpop} to be the graph $G^{\textrm{csp}}$ with nodes $V=[n]$ and edges $E$ satisfying $\{i,j\}\in E$ if one of the following holds:
\begin{enumerate}
    \item[(i)] there exists $(\b,\g)\in\supp(f)\cup\bigcup_{j\in J'}\supp(g_j)$ such that $\{i,j\}\subseteq\supp(\b)\cup\supp(\g)$;
    \item[(ii)] there exists $k\in[m]\backslash J'$ such that $\{i,j\}\subseteq\bigcup_{(\b,\g)\in\supp(g_k)}(\supp(\b)\cup\supp(\g))$.
\end{enumerate}

\begin{remark}
We adopt the idea of ``monomial sparsity" proposed in \cite{josz2018lasserre} in the definition of csp graphs, which thus is slightly different from the original definition in \cite{waki}.
\end{remark}

Let $\overline{G}^{\textrm{csp}}$ be a chordal extension of $G^{\textrm{csp}}$ and $\{I_l\}_{l\in[p]}$ be the list of maximal cliques of $\overline{G}^{\textrm{csp}}$ with $n_l\coloneqq|I_l|$. Let $\C[\z(I_l)]$ denote the ring of complex polynomials in the $n_l$ variables $\z(I_l) = \{z_i\mid i\in I_l\}$. We then partition the constraint polynomials $g_j, j\in[m]\backslash J'$ into groups $\{g_j\mid j\in J_l\}, l\in[p]$ which satisfy:
\begin{enumerate}
    \item[(i)] $J_1,\ldots,J_p\subseteq[m]\backslash J'$ are pairwise disjoint and $\cup_{l=1}^pJ_l=[m]\backslash J'$;
    \item[(ii)] for any $j\in J_l$, $\bigcup_{(\b,\g)\in\supp(g_j)}(\supp(\b)\cup\supp(\g))\subseteq I_l$, $l\in[p]$.
\end{enumerate}

\revision{\begin{example}
Consider the following CPOP
\begin{equation*}
\begin{cases}
\inf_{\z\in\C^3} &z_1\bar{z}_2+\bar{z}_1z_2+|z_3|^2\\
\st&g_1=1-|z_1|^2-|z_2|^2\ge0,\\
&g_2=1-|z_2|^2-|z_3|^2\ge0,\\
&g_3=|z_1|^4+z_2\bar{z}_3+\bar{z}_2z_3\ge0.\\
\end{cases}
\end{equation*}
Taking $d=d_{\min}=2$, we have two variable cliques $I_1=\{1,2\}, I_2=\{2,3\}$, and $J'=\{3\}, J_1=\{1\}, J_2=\{2\}$; taking $d=3$, we have one variable clique $I_1=\{1,2,3\}$, and $J'=\emptyset, J_1=\{1,2,3\}$.
\end{example}}

Next, with $l\in[p]$ and $g\in\C[\z(I_l)]$, let $\M^{\textrm{c}}_d(\y, I_l)$ (resp. $\M^{\textrm{c}}_d(g\y, I_l)$)
be the complex moment (resp. complex localizing) submatrix obtained from $\M^{\textrm{c}}_d(\y)$ (resp. $\M^{\textrm{c}}_d(g\y)$) by retaining only those rows and columns indexed by $\b\in\N_d^n$ of $\M^{\textrm{c}}_d(\y)$ (resp. $\M^{\textrm{c}}_d(g\y)$) with $\supp(\b)\subseteq I_l$.

Then, the complex (moment) hierarchy based on correlative sparsity for CPOP \eqref{cpop} is defined as
\begin{equation}\label{cs-eq1}
(\textrm{Q}^{\textrm{cs}}_{d}):\quad
\begin{cases}
\inf &L^{\textrm{c}}_{\y}(f)\\
\textrm{s.t.}&\M^{\textrm{c}}_d(\y, I_l)\succeq0,\quad l\in[p],\\
&\M^{\textrm{c}}_{d-d_j}(g_j\y, I_l)\succeq0,\quad j\in J_l, l\in[p],\\
&L^{\textrm{c}}_{\y}(g_j)\ge0,\quad j\in J',\\
&y_{\mathbf{0},\mathbf{0}}=1.
\end{cases}
\end{equation}
We denote the optimum of $(\textrm{Q}^{\textrm{cs}}_{d})$ by $\rho^{\textrm{cs}}_d$.

\begin{proposition}\label{cs-prop}
If CPOP \eqref{cpop} is a QCQP, then $(\textrm{Q}^{\textrm{cs}}_{1})$ and $(\textrm{Q}_{1})$ yield the same lower bound for \eqref{cpop}, i.e., $\rho^{\textrm{cs}}_1=\rho_1$.
\end{proposition}
\begin{proof}
By construction, the objective function and the affine constraints of $(\textrm{Q}_{1})$ involve only the decision variables $\{y_{\b,\g}\}_{(\b,\g)}$ 
with $\supp(\b)\cup\supp(\g)\subseteq I_l$ for some $l\in[p]$. Therefore, we can replace $\M^{\textrm{c}}_{1}(\y)\succeq0$ by $B_G\circ \M^{\textrm{c}}_{1}(\y)\in\Pi_{G}(\H_+^{n+1})$ without changing the optimum, where $G$ is the graph obtained from $\overline{G}^{\textrm{csp}}$ by adding a node $0$ (corresponding to $\mathbf{0}\in\N^n$) and adding edges $\{0,i\}, i\in[n]$. Note that $G$ is again a chordal graph and so the equality of optima of $(\textrm{Q}_{1})$ and $(\textrm{Q}^{\textrm{cs}}_{1})$ follows from Theorem \ref{sec2-thm2}.
\end{proof}

\subsection{Term sparsity}\label{ts}
Besides correlative sparsity, one can also exploit term sparsity for the complex hierarchy, which was recently developed for real POPs in \cite{tssos2,tssos1,tssos3}. \revision{The intuition behind this procedure is the following: starting with a minimal initial support set, one expands the support set that is taken into account by iteratively performing chordal extensions to the related sparsity pattern graphs inspired by Theorem \ref{sec2-thm2}.} We next adapt it to the complex case.

Let
$\mathscr{A} = \supp(f)\cup\bigcup_{j=1}^m\supp(g_j)$. We define the {\em term sparsity pattern (tsp) graph} at relaxation order $d$ associated with CPOP \eqref{cpop} or the set $\A$, to be the graph $G_{d}^{\textrm{tsp}}$ with nodes $V=\N^n_{d}$ and edges
\begin{equation}\label{ts-eq0}
E\coloneqq\{\{\b,\g\}\subseteq\N^n_d\mid(\b,\g)\in\A\}.
\end{equation}

\begin{remark}
There is a difference on the definitions of tsp graphs between the complex and real cases. In the real case, we use $\A\cup2\N^n_{d}$ rather than $\A$ in \eqref{ts-eq0} due to the Hankel structure of real moment matrices.
\end{remark}

\revision{\begin{example}\label{ex-ts}
Consider the following CPOP
\begin{equation*}
\begin{cases}
\inf_{\z\in\C^3} &z_1^2+\bar{z}_1^2+z_1\bar{z}_2+\bar{z}_1z_2+z_2\bar{z}_3+\bar{z}_2z_3+z_1z_2\bar{z}_3+\bar{z}_1\bar{z}_2z_3\\
\st &g_1=1-|z_1|^2-|z_2|^2-|z_3|^2\ge0.\\
\end{cases}
\end{equation*}
Figure \ref{fg-ts} illustrates the tsp graph $G_{2}^{\textrm{tsp}}$ for this CPOP, where the nodes are labeled by $\z^{\b}$ instead of $\b$ for better visualization.

\begin{figure}[htbp]
\centering
{\tiny
\begin{tikzpicture}[every node/.style={circle, draw=blue!50, thick, minimum size=8mm}]
\node (n1) at (-2,0) {$1$};
\node (n8) at (0,0) {$z_2^2$};
\node (n9) at (-2,-2) {$z_1^2$};
\node (n10) at (0,-2) {$z_3^2$};
\node (n2) at (2,0) {$z_1$};
\node (n3) at (4,0) {$z_2$};
\node (n4) at (6,0) {$z_3$};
\node (n5) at (2,-2) {$z_2z_3$};
\node (n6) at (4,-2) {$z_1z_3$};
\node (n7) at (6,-2) {$z_1z_2$};
\draw (n1)--(n9);
\draw (n2)--(n3);
\draw (n3)--(n4);
\draw (n4)--(n7);
\end{tikzpicture}}
\caption{The tsp graph with $d=2$ for Example \ref{ex-ts}}\label{fg-ts}
\end{figure} 
\end{example}}

For any graph $G$ with $V\subseteq\N^n$ and $g=\sum_{(\b',\g')}g_{\b',\g'}\z^{\b'}\bar{\z}^{\g'}\in\C[\z,\bar{\z}]$, we define the {\em $g$-support} of $G$ by
\begin{equation*}
\supp_g(G)\coloneqq\{(\b+\b',\g+\g')\mid\b=\g\in V(G)\textrm{ or }\{\b,\g\}\in E(G),(\b',\g')\in\supp(g)\}.
\end{equation*}
Let us set $d_0\coloneqq0$ and $g_0\coloneqq1$. Now assume that $G_{d,0}^{(0)}=G_{d}^{\textrm{tsp}}$ and $G_{d,j}^{(0)},j\in[m]$ are empty graphs. Then, we iteratively define an ascending chain of graphs $(G_{d,j}^{(k)}(V_{d,j},E_{d,j}^{(k)}))_{k\ge1}$ with $V_{d,j}=\N^n_{d-d_j}$ for each $j\in\{0\}\cup[m]$ by
\begin{equation}\label{ts-eq1}
G_{d,j}^{(k)}\coloneqq\overline{F_{d,j}^{(k)}},
\end{equation}
where $F_{d,j}^{(k)}$ is the graph with $V(F_{d,j}^{(k)})=\N^n_{d-d_j}$ and
\begin{equation}\label{ts-eq2}
E(F_{d,j}^{(k)})=\{\{\b,\g\}\subseteq\N^n_{d-d_j}\mid((\b,\g)+\supp(g_j))\cap(\bigcup_{i=0}^m\supp_{g_i}(G_{d,i}^{(k-1)}))\ne\emptyset\}.
\end{equation}

Let $r_j\coloneqq\binom{n+d-d_j}{d-d_j}$ for $j\in\{0\}\cup[m]$. Then with $d\ge d_{\min}$ and $k\ge1$, the complex (moment) hierarchy based on term sparsity for CPOP \eqref{cpop} is defined as
\begin{equation}\label{ts-eq3}
(\textrm{Q}^{\textrm{ts}}_{d,k}):\quad
\begin{cases}
\inf &L^{\textrm{c}}_{\y}(f)\\
\textrm{s.t.}&B_{G_{d,0}^{(k)}}\circ \M^{\textrm{c}}_d(\y)\in\Pi_{G_{d,0}^{(k)}}(\H_+^{r_0}),\\
&B_{G_{d,j}^{(k)}}\circ \M^{\textrm{c}}_{d-d_j}(g_j\y)\in\Pi_{G_{d,j}^{(k)}}(\H_+^{r_j}),\quad j\in[m],\\
&y_{\mathbf{0},\mathbf{0}}=1,
\end{cases}
\end{equation}
with optimum denoted by $\rho^{\textrm{ts}}_{d,k}$. The above hierarchy is called the (complex) {\em TSSOS} hierarchy, which is indexed by two parameters: the relaxation order $d$ and the {\em sparse order} $k$.

\begin{theorem}\label{ts-thm1}
Consider CPOP \eqref{cpop}. The following hold:
\begin{enumerate}
    \item[(i)] Fixing a relaxation order $d\ge d_{\min}$, the sequence $(\rho^{\textrm{ts}}_{d,k})_{k\ge1}$ is monotonically nondecreasing and $\rho^{\textrm{ts}}_{d,k}\le\rho_{d}$ for all $k$ (with $\rho_{d}$ defined in Section \ref{complexsos}).
    \item[(ii)] Fixing a sparse order $k\ge1$, the sequence $(\rho^{\textrm{ts}}_{d,k})_{d\ge d_{\min}}$ is monotonically nondecreasing.
\end{enumerate}
\end{theorem}
\begin{proof}
(i). For all $j,k$, by construction we have $G_{d,j}^{(k)}\subseteq G_{d,j}^{(k+1)}$, which implies that $B_{G_{d,j}^{(k)}}\circ \M^{\textrm{c}}_{d-d_j}(g_j\y)\in\Pi_{G_{d,j}^{(k)}}(\H_+^{r_j})$ is less restrictive than $B_{G_{d,j}^{(k+1)}}\circ \M^{\textrm{c}}_{d-d_j}(g_j\y)\in\Pi_{G_{d,j}^{(k+1)}}(\H_+^{r_j})$. Hence, $(\textrm{Q}_{d,k}^{\textrm{ts}})$ is a relaxation of $(\textrm{Q}_{d,k+1}^{\textrm{ts}})$ and is clearly also a relaxation of $(\textrm{Q}_{d})$. As a result, $(\rho^{\textrm{ts}}_{d,k})_{k\ge1}$ is monotonically nondecreasing and $\rho^{\textrm{ts}}_{d,k}\le\rho_{d}$ for all $k$.

(ii). The conclusion follows if we can show that the inclusion $G_{d,j}^{(k)}\subseteq G_{d+1,j}^{(k)}$ holds for all $d,j$ since this implies that $(\textrm{Q}_{d,k}^{\textrm{ts}})$ is a relaxation of $(\textrm{Q}_{d+1,k}^{\textrm{ts}})$. Let us prove $G_{d,j}^{(k)}\subseteq G_{d+1,j}^{(k)}$ by induction on $k$. For $k=1$, we have $E(G_{d}^{\textrm{tsp}})\subseteq E(G_{d+1}^{\textrm{tsp}})$ by $\eqref{ts-eq0}$, which implies $G_{d,j}^{(1)}\subseteq G_{d+1,j}^{(1)}$ for all $d,j$. Now assume that $G_{d,j}^{(k)}\subseteq G_{d+1,j}^{(k)}$ holds for all $d,j$ for a given $k\ge 1$. Then by \eqref{assum}, \eqref{ts-eq1}, \eqref{ts-eq2} and by the induction hypothesis, we deduce that $G_{d,j}^{(k+1)}\subseteq G_{d+1,j}^{(k+1)}$ holds for all $d,j$, which completes the induction.
\end{proof}

When building $(\textrm{Q}^{\textrm{ts}}_{d,k})$, we have the freedom to choose a specific chordal extension for any involved graph $G_{d,j}^{(k)}$, which offers a trade-off between the quality of obtained bounds and the computational cost. We show that if the maximal chordal extension is chosen, then with $d$ fixed, the resulting sequence of optima of the hierarchy (as $k$ increases) converges in finitely many steps to the optimum of the corresponding dense relaxation.

\begin{theorem}\label{ts-thm2}
Consider CPOP \eqref{cpop}. If the maximal chordal extension is used in \eqref{ts-eq1}, then for $d\ge d_{\min}$, $(\rho^{\textrm{ts}}_{d,k})_{k\ge 1}$ converges to $\rho_{d}$ in finitely many steps.
\end{theorem}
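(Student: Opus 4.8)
The plan is to argue that, with $d$ fixed, the ascending chain of term-sparsity graphs stabilizes after finitely many iterations, and that at the stabilized step the term-sparse relaxation built with the maximal chordal extension is equivalent to the dense relaxation $(\textrm{Q}_d)$.

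First I would note that, since $d$ is fixed, every graph $G_{d,j}^{(k)}$ has the finite vertex set $\N^n_{d-d_j}$, and the argument in the proof of Theorem~\ref{ts-thm1}(i) shows the edge sets are nondecreasing in $k$. Hence all the chains stabilize simultaneously: there is a common index $k^\ast$ with $G_{d,j}^{(k)}=G_{d,j}^{(k^\ast)}=:G_j$ for every $k\ge k^\ast$ and every $j\in\{0\}\cup[m]$, so $\rho^{\textrm{ts}}_{d,k}=\rho^{\textrm{ts}}_{d,k^\ast}$ for $k\ge k^\ast$ and it suffices to prove $\rho^{\textrm{ts}}_{d,k^\ast}=\rho_d$. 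The inequality $\rho^{\textrm{ts}}_{d,k^\ast}\le\rho_d$ is Theorem~\ref{ts-thm1}(i), so only $\rho_d\le\rho^{\textrm{ts}}_{d,k^\ast}$ remains.

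For that inequality, set $\mathcal M:=\bigcup_{j=0}^m\supp_{g_j}(G_j)$, the set of moment indices that actually appear in $(\textrm{Q}^{\textrm{ts}}_{d,k^\ast})$. Since the maximal chordal extension is used, each $G_j$ is a disjoint union of cliques, namely its connected components, and these components agree with those of $F_{d,j}^{(k^\ast+1)}$. I would read off from the fixed-point identity $G_j=\overline{F_{d,j}^{(k^\ast+1)}}$ together with \eqref{ts-eq1}--\eqref{ts-eq2} the following structural fact: for every $j$ and all $\b\ne\g$ in $\N^n_{d-d_j}$, if $\big((\b,\g)+\supp(g_j)\big)\cap\mathcal M\ne\emptyset$ then $\b,\g$ lie in a common clique of $G_j$, i.e. $\{\b,\g\}\in E(G_j)$; for $j=0$ (with $g_0=1$) this specializes to $(\b,\g)\in\mathcal M\iff\{\b,\g\}\in E(G_0)$ for $\b\neq\g$. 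Then, given any $\y$ feasible for $(\textrm{Q}^{\textrm{ts}}_{d,k^\ast})$, I would define $\tilde\y$ by $\tilde y_{\b,\g}:=y_{\b,\g}$ if $(\b,\g)\in\mathcal M$ and $\tilde y_{\b,\g}:=0$ otherwise. The structural fact forces every off-block entry of $\M^{\textrm{c}}_d(\tilde\y)$ and of each $\M^{\textrm{c}}_{d-d_j}(g_j\tilde\y)$ to vanish, so these Hermitian matrices are block diagonal along the cliques of $G_j$, and each diagonal block coincides with the corresponding block of the sparse (completed) matrix, which is PSD by Theorem~\ref{sec2-thm2} and the constraints of $(\textrm{Q}^{\textrm{ts}}_{d,k^\ast})$; hence $\M^{\textrm{c}}_d(\tilde\y)\succeq0$ and $\M^{\textrm{c}}_{d-d_j}(g_j\tilde\y)\succeq0$. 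Since $\tilde y_{\mathbf 0,\mathbf 0}=y_{\mathbf 0,\mathbf 0}=1$, $\tilde\y$ is conjugation-symmetric because $\mathcal M$ and $\y$ are, and $\supp(f)\subseteq\supp_{g_0}(G_0)\subseteq\mathcal M$ gives $L^{\textrm{c}}_{\tilde\y}(f)=L^{\textrm{c}}_{\y}(f)$, the point $\tilde\y$ is feasible for $(\textrm{Q}_d)$ with the same objective value, so $\rho_d\le L^{\textrm{c}}_{\y}(f)$; taking the infimum over $\y$ yields $\rho_d\le\rho^{\textrm{ts}}_{d,k^\ast}$, completing the proof.

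The main obstacle is establishing the structural fact and using it to guarantee that no moment occurring in an off-block position of a localizing matrix has index in $\mathcal M$: this is precisely where the fixed-point condition on $F_{d,j}^{(k^\ast+1)}$ is exploited, in combination with the fact that the maximal chordal extension collapses each connected component into a clique. The rest — the degree bookkeeping for moments that occur only in localizing matrices, conjugation symmetry, and the normalization at $\mathbf 0$ — is routine. This parallels the real-case convergence argument of \cite{tssos2,tssos3}, the only difference being that the Hankel-related corrections needed there are absent here because complex moment matrices are not Hankel.
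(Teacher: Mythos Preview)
Your proposal is correct and follows essentially the same route as the paper's own proof: stabilize the graph chain, zero out the moments outside $\mathcal M$, and use the block-diagonal structure forced by the maximal chordal extension to show the extended sequence is feasible for $(\textrm{Q}_d)$ with the same objective value. The only difference is cosmetic --- you spell out the ``structural fact'' explicitly via the fixed-point identity $G_j=\overline{F_{d,j}^{(k^\ast+1)}}$, whereas the paper compresses that step into the single line $\M^{\textrm{c}}_{d-d_j}(g_j\overline{\y}^*)=B_{G_{d,j}^{(\circ)}}\circ \M^{\textrm{c}}_{d-d_j}(g_j\y^*)$ ``by construction''.
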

\begin{proof}
Let $d$ be fixed. It is clear that for all $j\in\{0\}\cup[m]$, the graph sequence $(G_{d,j}^{(k)})_{k\ge1}$ stabilizes after finitely many steps and we denote the stabilized graph by $G_{d,j}^{(\circ)}$. Let $(\textrm{Q}_{d,\circ}^{\textrm{ts}})$ be the moment relaxation corresponding to the stabilized graphs and let $\y^{*}=(y^{*}_{\b,\g})$ be an arbitrary feasible solution. Notice that $\{y_{\b,\g}\mid (\b,\g)\in\bigcup_{i=0}^m\supp_{g_j}(G_{d,j}^{(\circ)})\}$ is the set of decision variables involved in $(\textrm{Q}_{d,\circ}^{\textrm{ts}})$ and $\{y_{\b,\g}\mid (\b,\g)\in\N^n_d\times\N^n_d\}$ is the set of decision variables involved in ($\textrm{Q}_{d}$). Define $\overline{\y}^{*}=(\overline{y}^{*}_{\b,\g})_{(\b,\g)\in\N^n_d\times\N^n_d}$ as follows:
$$\overline{y}_{\b,\g}^{*}=\begin{cases}y_{\b,\g}^{*},\quad\textrm{ if }(\b,\g)\in\bigcup_{i=0}^m\supp_{g_j}(G_{d,j}^{(\circ)}),\\
0,\quad\quad\,\,\,\,\textrm{otherwise}.
\end{cases}$$
If the maximal chordal extension is used in \eqref{ts-eq1}, then we have that the matrices in $\Pi_{G_{d,j}^{(k)}}(\H_+^{r_j})$ are block-diagonal (up to permutation on rows and columns) for all $j,k$. As a consequence, $B_{G_{d,j}^{(k)}}\circ \M^{\textrm{c}}_{d-d_j}(g_j\y)\in\Pi_{G_{d,j}^{(k)}}(\H_+^{r_j})$ implies $B_{G_{d,j}^{(k)}}\circ \M^{\textrm{c}}_{d-d_j}(g_j\y)\succeq0$. 
By construction, we have $\M^{\textrm{c}}_{d-d_j}(g_j\overline{\y}^*)=B_{G_{d,j}^{(\circ)}}\circ \M^{\textrm{c}}_{d-d_j}(g_j\y^*)\succeq0$ for all $j\in\{0\}\cup[m]$. Therefore, $\overline{\y}^{*}$ is a feasible solution of ($\textrm{Q}_{d}$) and hence $L^{\textrm{c}}_{\y^{*}}(f)=L^{\textrm{c}}_{\overline{\y}^{*}}(f)\ge\rho_{d}$, which yields $\rho_{d,\circ}^{\textrm{ts}}\ge\rho_{d}$ since $\y^{*}$ is an arbitrary feasible solution of $(\textrm{Q}_{d,\circ}^{\textrm{ts}})$. 
By (i) of Theorem \ref{ts-thm1}, we already have $\rho_{d,\circ}^{\textrm{ts}}\le\rho_{d}$. So $\rho_{d,\circ}^{\textrm{ts}}=\rho_{d}$ as desired.
\end{proof}

\begin{proposition}\label{ts-prop}
If CPOP \eqref{cpop} is a QCQP, then $(\textrm{Q}^{\textrm{ts}}_{1,1})$ and $(\textrm{Q}_{1})$ yield the same lower bound for CPOP \eqref{cpop}, i.e., $\rho^{\textrm{ts}}_{1,1}=\rho_1$.
\end{proposition}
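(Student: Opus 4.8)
The plan is to mimic the proof strategy of Proposition~\ref{cs-prop} and show that, when CPOP~\eqref{cpop} is a QCQP, the first-order term-sparsity pattern graph at sparse order $1$ already captures all the structure present in the objective and constraints, so that passing to the sparse relaxation loses nothing. First I would observe that for a QCQP we have $d_{\min}=1$, hence $d_j\in\{0,1\}$ for all $j$; taking $d=1$, the localizing matrices $\M^{\textrm{c}}_{1-d_j}(g_j\y)$ are $1\times1$ (when $d_j=1$) or the full moment matrix (when $d_j=0$, which does not occur for a genuinely quadratic constraint but is harmless). So the only ``large'' PSD block is $\M^{\textrm{c}}_{1}(\y)$, indexed by $\N^n_1=\{\mathbf{0},\be_1,\ldots,\be_n\}$.

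Next I would make the key support computation. Since $f,g_1,\ldots,g_m$ have degree at most $2$, every $(\b,\g)\in\A=\supp(f)\cup\bigcup_j\supp(g_j)$ satisfies $\b,\g\in\N^n_1$, so by \eqref{ts-eq0} we have $\{\b,\g\}\in E(G_1^{\textrm{tsp}})$ for \emph{every} pair of monomials $\z^\b\bar\z^\g$ actually appearing in the problem data. Thus $G_{1,0}^{(0)}=G_1^{\textrm{tsp}}$ already contains all the edges ``needed'' by the affine part of $(\textrm{Q}_1)$. I would then note that the objective $L^{\textrm{c}}_{\y}(f)$ and the affine constraints $L^{\textrm{c}}_{\y}(g_j)\ge0$ (the degenerate $d_j=0$ case) involve only the decision variables $\{y_{\b,\b}\}_{\b\in\N^n_1}$ together with $\{y_{\b,\g}\mid\{\b,\g\}\in E(G_1^{\textrm{tsp}})\}$. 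Consequently, exactly as in the proof of Proposition~\ref{cs-prop}, we may replace the constraint $\M^{\textrm{c}}_1(\y)\succeq0$ in $(\textrm{Q}_1)$ by $B_{G}\circ\M^{\textrm{c}}_1(\y)\in\Pi_{G}(\H_+^{n+1})$ without changing the optimum, where $G$ is $G_{1,0}^{(1)}$ (or $G_1^{\textrm{tsp}}$ after one iteration — I will need to check they agree here), provided $G$ is chordal. The entries of $\M^{\textrm{c}}_1(\y)$ outside this support are free, so they can always be completed; since $y_{\b,\b}$ lies on the diagonal, and since for a QCQP every diagonal node $\b$ is linked to $\mathbf{0}$ through the constant/linear terms of the quadratic data (one should verify this using $g_j$ genuinely quadratic, or simply note the diagonal entries of a PSD matrix are handled by the diagonal of $B_G$), the completion argument goes through.

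The one genuine obstacle is chordality: Theorem~\ref{sec2-thm2} requires $G_{1,0}^{(1)}$ to be a chordal graph before we can invoke the PSD-completion characterization, and a priori $G_1^{\textrm{tsp}}$ need not be chordal. Here the definition~\eqref{ts-eq1} saves us: $G_{1,0}^{(1)}=\overline{F_{1,0}^{(1)}}$ is \emph{by construction} a chordal extension, so the relaxation $(\textrm{Q}^{\textrm{ts}}_{1,1})$ already uses a chordal graph. Thus I would argue: $(\textrm{Q}^{\textrm{ts}}_{1,1})$ replaces $\M^{\textrm{c}}_1(\y)\succeq0$ by $B_{G_{1,0}^{(1)}}\circ\M^{\textrm{c}}_1(\y)\in\Pi_{G_{1,0}^{(1)}}(\H_+^{n+1})$ with $G_{1,0}^{(1)}$ chordal and containing all edges supporting the objective and affine constraints; by Theorem~\ref{sec2-thm2} this is equivalent to PSD-completability, and since the objective only sees the in-support entries, the optimum is unchanged, giving $\rho^{\textrm{ts}}_{1,1}=\rho_1$. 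One final check I would carry out carefully is that the first iteration $F_{1,0}^{(1)}$ does not \emph{lose} any edge of $G_1^{\textrm{tsp}}=G_{1,0}^{(0)}$: indeed \eqref{ts-eq2} with $j=0$, $g_0=1$ gives $\{\b,\g\}\in E(F_{1,0}^{(1)})$ whenever $(\b,\g)\in\bigcup_i\supp_{g_i}(G_{1,i}^{(0)})\supseteq\supp_1(G_{1,0}^{(0)})$, which contains $\A$ and the diagonal, so all relevant edges survive and then possibly more are added by chordal extension — monotonicity~(i) of Theorem~\ref{ts-thm1} then gives $\rho^{\textrm{ts}}_{1,1}\le\rho_1$ for free, and the completion argument gives the reverse inequality.
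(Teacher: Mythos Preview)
Your approach is correct and essentially identical to the paper's: both proofs observe that for a QCQP at $d=1$ the localizing constraints collapse to scalars $L^{\textrm{c}}_{\y}(g_j)\ge0$, so the only nontrivial PSD block is $\M^{\textrm{c}}_1(\y)$, and the objective together with all affine constraints touch only the entries $\{y_{\mathbf{0},\mathbf{0}}\}\cup\{y_{\b,\g}:(\b,\g)\in\A\}$, which are precisely the diagonal and the edges of $G_1^{\textrm{tsp}}\subseteq G_{1,0}^{(1)}$.

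One simplification the paper makes that you miss: your ``one genuine obstacle'' of chordality is not an obstacle at all. The constraint in $(\textrm{Q}^{\textrm{ts}}_{1,1})$ is already stated as $B_{G_{1,0}^{(1)}}\circ\M^{\textrm{c}}_1(\y)\in\Pi_{G_{1,0}^{(1)}}(\H_+^{n+1})$, and by the \emph{definition} \eqref{sec2-eq8} this set is exactly the projections of full PSD matrices --- i.e., the PSD-completable patterns --- regardless of whether $G_{1,0}^{(1)}$ is chordal. Theorem~\ref{sec2-thm2} is only needed to \emph{decompose} that cone into small blocks (which is how Proposition~\ref{cs-prop} must argue, because $(\textrm{Q}^{\textrm{cs}}_1)$ is formulated via block submatrices $\M^{\textrm{c}}_d(\y,I_l)\succeq0$ rather than via $\Pi_G$). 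Here the paper simply says ``by construction'' there is no discrepancy, since any feasible point of $(\textrm{Q}^{\textrm{ts}}_{1,1})$ has a PSD completion by definition of $\Pi_G(\H_+)$, and conversely any feasible point of $(\textrm{Q}_1)$ projects to one of $(\textrm{Q}^{\textrm{ts}}_{1,1})$, with identical objective value either way.
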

\begin{proof}
For a QCQP, $(\textrm{Q}_{1})$ reads as
\begin{equation*}
(\textrm{Q}_{1}):\quad
\begin{cases}
\inf& L^{\textrm{c}}_{\y}(f)\\
\textrm{s.t.}&\M^{\textrm{c}}_{1}(\y)\succeq0,\\
&L^{\textrm{c}}_{\y}(g_j)\ge0,\quad j\in[m],\\
&y_{\mathbf{0},\mathbf{0}}=1.
\end{cases}
\end{equation*}
Note that the objective function and the affine constraints of $(\textrm{Q}_{1})$ involve only the decision variables $\{y_{\mathbf{0},\mathbf{0}}\}\cup\{y_{\b,\g}\}_{(\b,\g)\in\A}$ with $\mathscr{A}=\supp(f)\cup\bigcup_{j=1}^m\supp(g_j)$. Hence there is no discrepancy of optima in replacing $(\textrm{Q}_{1})$ with $(\textrm{Q}^{\textrm{ts}}_{1,1})$ by construction.
\end{proof}

\noindent{\bf Sign symmetry.} Let $\A\subseteq\N^n\times\N^n$. The {\em sign symmetries} of $\A$ consist of all binary vectors $\br\in\Z_2^n\coloneqq\{0,1\}^n$ such that $\br^T(\b+\g)\equiv0$ $(\textrm{mod }2)$ for all $(\b,\g)\in\A$.
For the monomial basis $\N_d^n$, a set of sign symmetries $R=[\br_1,\ldots,\br_s]$ (regarded as a matrix with columns $\br_i\in\Z_2^n$) induces a partition on $\N_d^n$:
$\b,\g\in\N_d^n$ belong to the same block in this partition if and only if $R^T(\b+\g)\equiv\mathbf{0}$ $(\textrm{mod }2)$. 

We now prove that the block structure at each step of the TSSOS hierarchy is a refinement of the one induced by the sign symmetries of the system.
\begin{theorem}\label{ts-thm3}
Consider CPOP \eqref{cpop}. Let
$\mathscr{A} = \supp(f)\cup\bigcup_{j=1}^m\supp(g_j)$ and $R$ be its sign symmetries.
Assume $d\ge d_{\min}$ and $k\ge1$. For any $j\in\{0\}\cup[m]$ and $\b,\g\in\N^n_{d-d_j}$, if $\{\b,\g\}\in E(G_{d,j}^{(k)})$, then $R^T(\b+\g)\equiv\mathbf{0}$ $(\textrm{mod }2)$.
\end{theorem}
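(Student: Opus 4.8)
The plan is to prove, by induction on $k$, a statement slightly stronger than the one asked for: for every $k\ge 0$ and every $j\in\{0\}\cup[m]$, the graph $G_{d,j}^{(k)}$ is \emph{$R$-consistent}, meaning that every edge $\{\b,\g\}\in E(G_{d,j}^{(k)})$ satisfies $R^T(\b+\g)\equiv\mathbf{0}\pmod{2}$. Since the theorem only concerns $k\ge 1$, this suffices. Equivalently, $R$-consistency says that the connected components of $G_{d,j}^{(k)}$ refine the partition of $\N^n_{d-d_j}$ induced by the sign symmetries $R$, which is exactly the ``refinement of the block structure'' claimed in the introduction.

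Before the induction I would record three elementary facts. \emph{(a)} By definition of $R$, the set $\A$ is sign-symmetric, i.e.\ $R^T(\b+\g)\equiv\mathbf{0}\pmod{2}$ for all $(\b,\g)\in\A$; in particular $\supp(g_j)\subseteq\A$ is sign-symmetric for $j\in[m]$, and $\supp(g_0)=\{(\mathbf{0},\mathbf{0})\}$ is trivially sign-symmetric. \emph{(b)} If a graph $G$ with vertex set $V\subseteq\N^n$ is $R$-consistent and $\supp(g)$ is sign-symmetric, then $\supp_g(G)$ is sign-symmetric: for a generic element $(\b+\b',\g+\g')\in\supp_g(G)$ with $(\b',\g')\in\supp(g)$ and either $\b=\g$ or $\{\b,\g\}\in E(G)$, one has $R^T((\b+\b')+(\g+\g'))=R^T(\b+\g)+R^T(\b'+\g')$, where $R^T(\b'+\g')\equiv\mathbf{0}$ by sign-symmetry of $\supp(g)$ and $R^T(\b+\g)\equiv\mathbf{0}$ because $\b=\g$ forces this summand to be $2R^T\b$ and an edge of $G$ forces it by $R$-consistency. \emph{(c)} The chordal extension $\overline{F}$ of an $R$-consistent graph $F$ is again $R$-consistent.

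Granting these, the induction is short. For $k=0$, $G_{d,0}^{(0)}=G_d^{\textrm{tsp}}$ has edge set $\{\{\b,\g\}\mid(\b,\g)\in\A\}$ and is $R$-consistent by (a), while $G_{d,j}^{(0)}$ for $j\in[m]$ is empty. For the step, suppose $G_{d,i}^{(k-1)}$ is $R$-consistent for all $i$. By (a) and (b), each $\supp_{g_i}(G_{d,i}^{(k-1)})$ is sign-symmetric, hence so is $S:=\bigcup_{i=0}^m\supp_{g_i}(G_{d,i}^{(k-1)})$. Fix $j$ and take $\{\b,\g\}\in E(F_{d,j}^{(k)})$; by \eqref{ts-eq2} there is $(\b',\g')\in\supp(g_j)$ with $(\b+\b',\g+\g')\in S$, and then $R^T(\b+\g)=R^T((\b+\b')+(\g+\g'))-R^T(\b'+\g')\equiv\mathbf{0}\pmod{2}$ using sign-symmetry of $S$ and of $\supp(g_j)$. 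Thus $F_{d,j}^{(k)}$ is $R$-consistent, and by \eqref{ts-eq1} and (c) so is $G_{d,j}^{(k)}$, which closes the induction.

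The step that needs care is fact (c). Note that \eqref{assum} alone is not enough: a chordal-extension rule that completes the whole graph also satisfies \eqref{assum} but would merge sign-symmetry blocks. What saves us is that $R$-consistency of $F$ means precisely that each connected component of $F$ lies inside a single block of the sign-symmetry partition of $V(F)$, and the chordal extensions considered in this paper — the maximal one, which completes each connected component, and the greedy minimum-degree / minimum-fill-in heuristics — only add edges within connected components, hence only within blocks. I would therefore state (c) as a property of these specific extensions; alternatively, one could add ``does not add edges between distinct sign-symmetry blocks'' to the standing assumptions on the chordal-extension rule alongside \eqref{assum}, at no cost for the extensions actually used in the paper.
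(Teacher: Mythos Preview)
Your proof is correct and follows essentially the same route as the paper's: both argue by induction on $k$, use that $\supp(g_j)\subseteq\A$ is sign-symmetric to pass from $(\b+\b',\g+\g')$ back to $(\b,\g)$, and rely on the chordal extension only adding edges within connected components to pass from $F_{d,j}^{(k)}$ to $G_{d,j}^{(k)}$. Your treatment is in fact more careful than the paper's on point~(c): the paper simply asserts that ``the additional edges connect two nodes belonging to the same connected component'' without noting, as you do, that this is a property of the specific extensions used (maximal, minimum-degree, minimum-fill-in) and is \emph{not} implied by \eqref{assum} alone.
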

\begin{proof}
We prove the conclusion by induction on $k$. For $k=1$, suppose $\{\b,\g\}\in E(F_{d,j}^{(1)})$. By \eqref{ts-eq2}, there exists $(\b',\g')\in\supp(g_j)$ such that $(\b,\g)+(\b',\g')\in\A$. Hence $R^T(\b+\g)+R^T(\b'+\g')\equiv\mathbf{0}$ $(\textrm{mod }2)$ and since $R^T(\b'+\g')\equiv\mathbf{0}$ $(\textrm{mod }2)$, it follows that $R^T(\b+\g)\equiv\mathbf{0}$ $(\textrm{mod }2)$. Note that $G_{d,j}^{(1)}$ is a chordal extension of $F_{d,j}^{(1)}$ such that the additional edges connect two nodes belonging to the same connected component. Thus it is not hard to show that if $\{\b,\g\}\in E(G_{d,j}^{(k)})$, then $R^T(\b+\g)\equiv\mathbf{0}$ $(\textrm{mod }2)$. Now assume that the conclusion holds for a given $k\ge1$, which implies that for all $i$ and for any $(\b,\g)\in\supp_{g_i}(G_{d,i}^{(k)})$, $R^T(\b+\g)\equiv\mathbf{0}$ $(\textrm{mod }2)$. For $\{\b,\g\}\in E(F_{d,j}^{(k+1)})$, by \eqref{ts-eq2}, there exists $(\b',\g')\in\supp(g_j)$ such that $(\b,\g)+(\b',\g')\in\bigcup_{i=0}^m\supp_{g_i}(G_{d,i}^{(k)})$. Hence by the induction hypothesis, $R^T(\b+\g)+R^T(\b'+\g')\equiv\mathbf{0}$ $(\textrm{mod }2)$ and so $R^T(\b+\g)\equiv\mathbf{0}$ $(\textrm{mod }2)$. Because $G_{d,j}^{(k+1)}$ is a chordal extension of $F_{d,j}^{(k+1)}$ such that the additional edges connect two nodes belonging to the same connected component, we obtain that the conclusion holds for $k+1$ and complete the induction. 
\end{proof}

In contrast with the real case in which it was shown that the partition on $\N^n_{d-d_j},j\in\{0\}\cup[m]$ at the final step of the TSSOS hierarchy (i.e., when it stabilizes as $k$ increases) using the maximal chordal extension is exactly the one induced by the sign symmetries of the system \cite{tssos1}, in the complex case this is not necessarily true as the following example illustrates. 
\begin{example}
Consider the following CPOP $$\inf\{z_1+\bar{z}_1:1-z_1\bar{z}_1-z_2\bar{z}_2\ge0\}.$$
Let us take the relaxation order $d=2$. One can easily check by hand that the partition on $\N^2_{2}$ at the final step of the TSSOS hierarchy using the maximal chordal extension is $\{1,z_1,z_1^2\}$, $\{z_2^2\}$ and $\{z_2,z_1z_2\}$ while the partition induced by sign symmetries is $\{1,z_1,z_1^2,z_2^2\}$ and $\{z_2,z_1z_2\}$. Clearly, the former is a strict refinement of the latter.
\end{example}

\subsection{Correlative-term sparsity}\label{cs-ts}
We are now prepared to exploit correlative sparsity and term sparsity simultaneously in the complex hierarchy for CPOP \eqref{cpop}.

Let $\{I_l\}_{l\in[p]}, \{n_l\}_{l\in[p]}, J', \{J_l\}_{l\in[p]}$ be defined as in Section \ref{cs}. We apply the iterative procedure of exploiting term sparsity to each subsystem involving variables $\z(I_l)$ for $l\in[p]$ as follows. Let
\begin{equation}\label{cts-eq0}
    \mathscr{A} \coloneqq \supp(f)\cup\bigcup_{j=1}^m\supp(g_j)
\end{equation}
and
\begin{equation}\label{cts-eq1}
    \mathscr{A}_l \coloneqq \{(\b,\g)\in\A\mid\supp(\b)\cup\supp(\g)\subseteq I_l\}
\end{equation}
for $l\in[p]$. Fix a relaxation order $d\ge d_{\min}$. Let $G_{d,l}^{\textrm{tsp}}$ be the tsp graph with nodes $\N^{n_l}_{d-d_j}$ associated with $\A_l$ defined as in Section \ref{ts}. Note that here we embed $\N^{n_l}_{d-d_j}$ into $\N^{n}_{d-d_j}$ via the map
$\a=(\alpha_i)_{i\in I_l}\in\N^{n_l}_{d-d_j}\mapsto\a'=(\alpha'_i)_{i\in[n]}\in\N^{n}_{d-d_j}$ which satisfies 
\begin{equation*}
    \alpha'_i=\begin{cases}
    \alpha_i,\quad\textrm{if } i\in I_l,\\
    0,\quad\,\,\,\textrm{otherwise. }
    \end{cases}
\end{equation*}
Assume that $G_{d,l,0}^{(0)}=G_{d,l}^{\textrm{tsp}}$ and $G_{d,l,j}^{(0)},j\in J_l, l\in[p]$ are empty graphs. Letting
\begin{equation}\label{cts-eq2}
    \CC_{d}^{(k-1)}\coloneqq\bigcup_{l=1}^p\bigcup_{j\in \{0\}\cup J_l}\supp_{g_j}(G_{d,l,j}^{(k-1)}),\quad k\ge1,
\end{equation}
we iteratively define an ascending chain of graphs $(G_{d,l,j}^{(k)}(V_{d,l,j},E_{d,l,j}^{(k)}))_{k\ge1}$ with $V_{d,l,j}=\N^{n_l}_{d-d_j}$ for each $j\in\{0\}\cup J_l$ and each $l\in[p]$ by
\begin{equation}\label{cts-eq3}
G_{d,l,j}^{(k)}\coloneqq\overline{F_{d,l,j}^{(k)}},
\end{equation}
where $F_{d,l,j}^{(k)}$ is the graph with $V(F_{d,l,j}^{(k)})=\N^{n_l}_{d-d_j}$ and
\begin{equation}\label{cts-eq4}
E(F_{d,l,j}^{(k)})=\{\{\b,\g\}\subseteq\N^{n_l}_{d-d_j}\mid((\b,\g)+\supp(g_j))\cap\CC_{d}^{(k-1)}\ne\emptyset\}.
\end{equation}
Let $r_{d,l,j}\coloneqq\binom{n_l+d-d_j}{d-d_j}$ for all $l,j$. Then with $d\ge d_{\min}$ and $k\ge1$, the complex (moment) hierarchy based on correlative-term sparsity for CPOP \eqref{cpop} is defined as
\begin{equation}\label{cts-eq5}
(\textrm{Q}^{\textrm{cs-ts}}_{d,k}):\quad
\begin{cases}
\inf&L^{\textrm{c}}_{\y}(f)\\
\textrm{s.t.}&B_{G_{d,l,0}^{(k)}}\circ \M^{\textrm{c}}_d(\y, I_l)\in\Pi_{G_{d,l,0}^{(k)}}(\H_+^{r_{d,l,0}}),\quad l\in[p],\\
&B_{G_{d,l,j}^{(k)}}\circ \M^{\textrm{c}}_{d-d_j}(g_j\y, I_l)\in\Pi_{G_{d,l,j}^{(k)}}(\H_+^{r_{d,l,j}}),\quad j\in J_l,l\in[p],\\
&L^{\textrm{c}}_{\y}(g_j)\ge0,\quad j\in J',\\
&y_{\mathbf{0},\mathbf{0}}=1,
\end{cases}
\end{equation}
with optimum denoted by $\rho^{\textrm{cs-ts}}_{d,k}$. The above hierarchy is called the (complex) {\em CS-TSSOS} hierarchy indexed by the relaxation order $d$ and the sparse order $k$.

By similar arguments as for Theorem \ref{ts-thm1}, we can prove the following theorem.
\begin{theorem}\label{cts-thm1}
Consider CPOP \eqref{cpop}. The following hold:
\begin{enumerate}
    \item[(i)] Fixing a relaxation order $d\ge d_{\min}$, the sequence $(\rho^{\textrm{cs-ts}}_{d,k})_{k\ge1}$ is monotonically nondecreasing and $\rho^{\textrm{cs-ts}}_{d,k}\le\rho^{\textrm{cs}}_{d}$ for all $k$ (with $\rho^{\textrm{cs}}_{d}$ defined in Section \ref{cs}).
    \item[(ii)] Fixing a sparse order $k\ge 1$, the sequence $(\rho^{\textrm{cs-ts}}_{d,k})_{d\ge d_{\min}}$ is monotonically nondecreasing.
\end{enumerate}
\end{theorem}

From Theorem \ref{cts-thm1}, we have the following two-level hierarchy of lower bounds for the optimum of CPOP \eqref{cpop}:
\begin{equation}\label{cts-eq6}
\begin{matrix}
\rho^{\textrm{cs-ts}}_{d_{\min},1}&\le&\rho^{\textrm{cs-ts}}_{d_{\min},2}&\le&\cdots&\le&\rho^{\textrm{cs}}_{d_{\min}}\\
\vge&&\vge&&&&\vge\\
\rho^{\textrm{cs-ts}}_{d_{\min}+1,1}&\le&\rho^{\textrm{cs-ts}}_{d_{\min}+1,2}&\le&\cdots&\le&\rho^{\textrm{cs}}_{d_{\min}+1}\\
\vge&&\vge&&&&\vge\\
\vdots&&\vdots&&\vdots&&\vdots\\
\vge&&\vge&&&&\vge\\
\rho^{\textrm{cs-ts}}_{d,1}&\le&\rho^{\textrm{cs-ts}}_{d,2}&\le&\cdots&\le&\rho^{\textrm{cs}}_{d}\\
\vge&&\vge&&&&\vge\\
\vdots&&\vdots&&\vdots&&\vdots\\
\end{matrix}
\end{equation}

By similar arguments as for Theorem \ref{ts-thm2}, we can prove the convergence of the CS-TSSOS hierarchy at each relaxation order when the maximal chordal extension is chosen.
\begin{theorem}\label{cts-thm2}
Consider CPOP \eqref{cpop}. If the maximal chordal extension is used in \eqref{cts-eq3}, then for $d\ge d_{\min}$, $(\rho^{\textrm{cs-ts}}_{d,k})_{k\ge 1}$ converges to $\rho^{\textrm{cs}}_{d}$ in finitely many steps.
\end{theorem}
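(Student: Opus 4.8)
The plan is to mirror the proof of Theorem~\ref{ts-thm2}, with the role of $\rho_d$ now played by $\rho^{\textrm{cs}}_d$ and with the correlative-sparsity block structure carried along throughout. Fix $d\ge d_{\min}$. First I would observe that for each $l\in[p]$ and $j\in\{0\}\cup J_l$ the sequence $(G_{d,l,j}^{(k)})_{k\ge1}$ is an ascending chain of subgraphs of the fixed finite graph on node set $\N^{n_l}_{d-d_j}$, hence stabilizes after finitely many steps; write $G_{d,l,j}^{(\circ)}$ for the limit graph, $\CC_d^{(\circ)}:=\bigcup_{l=1}^p\bigcup_{j\in\{0\}\cup J_l}\supp_{g_j}(G_{d,l,j}^{(\circ)})$ for the stabilized index set of \eqref{cts-eq2}, and $(\textrm{Q}^{\textrm{cs-ts}}_{d,\circ})$ for the corresponding relaxation with optimum $\rho^{\textrm{cs-ts}}_{d,\circ}$. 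By Theorem~\ref{cts-thm1}(i) we already have $\rho^{\textrm{cs-ts}}_{d,\circ}\le\rho^{\textrm{cs}}_d$, so everything reduces to proving $\rho^{\textrm{cs-ts}}_{d,\circ}\ge\rho^{\textrm{cs}}_d$.

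To that end, let $\y^*=(y^*_{\b,\g})$ be an arbitrary feasible solution of $(\textrm{Q}^{\textrm{cs-ts}}_{d,\circ})$; its decision variables are the entries $y_{\b,\g}$ with $\b=\g$ or $(\b,\g)\in\CC_d^{(\circ)}$ (after the embedding $\N^{n_l}\hookrightarrow\N^n$), together with the scalar variables occurring in the affine constraints $L^{\textrm{c}}_{\y}(g_j)\ge0$, $j\in J'$. I would then define $\overline{\y}^*=(\overline{y}^*_{\b,\g})_{(\b,\g)\in\N^n_d\times\N^n_d}$ by $\overline{y}^*_{\b,\g}:=y^*_{\b,\g}$ if $\b=\g$ or $(\b,\g)\in\CC_d^{(\circ)}$, and $\overline{y}^*_{\b,\g}:=0$ otherwise. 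Since $\CC_d^{(\circ)}$ is invariant under $(\b,\g)\mapsto(\g,\b)$ (each $G_{d,l,j}^{(\circ)}$ is undirected and each $g_j$ Hermitian), $\overline{\y}^*$ satisfies the required symmetry $y_{\b,\g}=\bar y_{\g,\b}$ and $\overline{y}^*_{\mathbf{0},\mathbf{0}}=1$.

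The heart of the argument is showing that $\overline{\y}^*$ is feasible for $(\textrm{Q}^{\textrm{cs}}_d)$. Because the maximal chordal extension is used in \eqref{cts-eq3}, every matrix in $\Pi_{G_{d,l,j}^{(k)}}(\H_+^{r_{d,l,j}})$ is block-diagonal up to permutation (each block being a connected component of $G_{d,l,j}^{(k)}$ completed to a clique), so membership in this cone is equivalent to positive semidefiniteness. I would then prove that for each $l\in[p]$ and $j\in\{0\}\cup J_l$,
\[
\M^{\textrm{c}}_{d-d_j}(g_j\overline{\y}^*, I_l)=B_{G_{d,l,j}^{(\circ)}}\circ\M^{\textrm{c}}_{d-d_j}(g_j\y^*, I_l)\succeq0 .
\]
For the equality: if $\b=\g$ or $\{\b,\g\}\in E(G_{d,l,j}^{(\circ)})$, every index $(\b+\b',\g+\g')$ with $(\b',\g')\in\supp(g_j)$ that contributes to the $(\b,\g)$ entry lies in $\supp_{g_j}(G_{d,l,j}^{(\circ)})\subseteq\CC_d^{(\circ)}$, where $\overline{\y}^*$ and $\y^*$ agree; if $\b\ne\g$ and $\{\b,\g\}\notin E(G_{d,l,j}^{(\circ)})$, then for each $(\b',\g')\in\supp(g_j)$ the index $(\b+\b',\g+\g')$ is neither in $\CC_d^{(\circ)}$ (otherwise, $G_{d,l,j}^{(\circ)}$ being stabilized, $\{\b,\g\}$ would be an edge of $F_{d,l,j}^{(\circ)}\subseteq G_{d,l,j}^{(\circ)}$ by \eqref{cts-eq4}) nor diagonal (a diagonal index $\b+\b'=\g+\g'$ has, by a degree count, $|\b+\b'|\le d$ and support in $I_l$, hence lies in $\supp_{g_0}(G_{d,l,0}^{(\circ)})\subseteq\CC_d^{(\circ)}$, again forcing $\{\b,\g\}$ to be such an edge), so $\overline{y}^*_{\b+\b',\g+\g'}=0$ and the entry vanishes. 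Positive semidefiniteness then follows from feasibility of $\y^*$ and the block-diagonal remark. Finally $L^{\textrm{c}}_{\overline{\y}^*}(g_j)=L^{\textrm{c}}_{\y^*}(g_j)\ge0$ for $j\in J'$ and $L^{\textrm{c}}_{\overline{\y}^*}(f)=L^{\textrm{c}}_{\y^*}(f)$, since these functionals involve only variables in $\A$, on which $\overline{\y}^*$ and $\y^*$ coincide. Hence $\overline{\y}^*$ is feasible for $(\textrm{Q}^{\textrm{cs}}_d)$, so $L^{\textrm{c}}_{\y^*}(f)=L^{\textrm{c}}_{\overline{\y}^*}(f)\ge\rho^{\textrm{cs}}_d$; as $\y^*$ was arbitrary, $\rho^{\textrm{cs-ts}}_{d,\circ}\ge\rho^{\textrm{cs}}_d$, and with Theorem~\ref{cts-thm1}(i) we conclude $\rho^{\textrm{cs-ts}}_{d,\circ}=\rho^{\textrm{cs}}_d$.

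The main point requiring care—the only place where this is not a verbatim transcription of the proof of Theorem~\ref{ts-thm2}—is the sparsity-pattern bookkeeping just sketched: one must rule out that the \emph{global} zero-filling of $\overline{\y}^*$, built from the union $\CC_d^{(\circ)}$ over all cliques $I_l$, accidentally produces an off-block nonzero entry in some localizing submatrix $\M^{\textrm{c}}_{d-d_j}(g_j\overline{\y}^*, I_l)$, which would destroy block-diagonality and break the argument. The resolution is that any such entry would have to arise from an index already in $\CC_d^{(\circ)}$ or from a ``diagonal collision'' $\b+\b'=\g+\g'$, and in both cases the stabilization of the graphs forces $\{\b,\g\}$ to be an edge of $G_{d,l,j}^{(\circ)}$ after all. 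Everything else—finiteness of stabilization, the one-sided inequality from Theorem~\ref{cts-thm1}(i), and the equivalence between cone membership and positive semidefiniteness under the maximal chordal extension—is routine.
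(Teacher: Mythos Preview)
Your proposal is correct and follows exactly the approach the paper intends: the paper itself does not spell out a proof of Theorem~\ref{cts-thm2} but simply says ``By similar arguments as for Theorem~\ref{ts-thm2}'', and what you have written is precisely that transcription, with the extra clique-wise bookkeeping (the ``diagonal collision'' and stabilization arguments) filled in correctly. In particular, your treatment of the only new wrinkle---ruling out that the global zero-filling $\overline{\y}^*$ produces an off-pattern nonzero in some $\M^{\textrm{c}}_{d-d_j}(g_j\overline{\y}^*,I_l)$---is sound: both the $\CC_d^{(\circ)}$ case and the $\b+\b'=\g+\g'$ case do force $\{\b,\g\}\in E(G_{d,l,j}^{(\circ)})$ via \eqref{cts-eq4} and stabilization, and your degree count $|\b+\b'|\le d$ (using $|\b|+|\b'|=|\g|+|\g'|$ together with $|\b|,|\g|\le d-d_j$ and $|\b'|+|\g'|\le 2d_j$) is valid.
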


By slightly adapting the proof of Theorem \ref{ts-thm3}, one can prove that the block structure at each step of the CS-TSSOS hierarchy is also ``governed" by the sign symmetries of the system.
\begin{theorem}
Consider CPOP \eqref{cpop}. Let
$\mathscr{A} = \supp(f)\cup\bigcup_{j=1}^m\supp(g_j)$ and $R$ be its sign symmetries.
Assume $d\ge d_{\min}$ and $k\ge1$. For any $j\in J_l$, $l\in[p]$ and $\b,\g\in\N^{n_l}_{d-d_j}$, if $\{\b,\g\}\in E(G_{d,l,j}^{(k)})$, then $R^T(\b+\g)\equiv\mathbf{0}$ $(\textrm{mod }2)$.
\end{theorem}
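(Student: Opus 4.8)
The plan is to adapt the induction on the sparse order $k$ from the proof of Theorem \ref{ts-thm3}, the only genuinely new ingredient being that the set driving the recursion is now the union $\CC_d^{(k-1)}$ of the $g_j$-supports taken over all cliques $I_l$ and all relevant constraints, as in \eqref{cts-eq2}. I would first isolate a closure property that will be used at every step: if $G$ is a graph whose nodes are (embedded into) $\N^n$ and whose edges $\{\b,\g\}$ all satisfy $R^T(\b+\g)\equiv\mathbf{0}\ (\textrm{mod }2)$, and if $g=\sum_{(\b',\g')}g_{\b',\g'}\z^{\b'}\bar{\z}^{\g'}$ has $\supp(g)\subseteq\A$, then every $(\b+\b',\g+\g')\in\supp_g(G)$ satisfies $R^T((\b+\b')+(\g+\g'))\equiv R^T(\b+\g)+R^T(\b'+\g')\equiv\mathbf{0}\ (\textrm{mod }2)$; here the diagonal case $\b=\g$ is automatic (as $R^T(2\b)\equiv\mathbf{0}$) and $R^T(\b'+\g')\equiv\mathbf{0}$ because $(\b',\g')\in\A$. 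Applying this with $g=g_j$ for $j\in\{0\}\cup J_l$ (note $g_0=1$, so $\supp(g_0)\subseteq\A$ trivially) shows that if all the graphs $G_{d,l,j}^{(k-1)}$ have the ``sign-symmetry property'' (every edge $\{\b,\g\}$ satisfies $R^T(\b+\g)\equiv\mathbf{0}$), then so does $\CC_d^{(k-1)}$, i.e.\ $R^T(\b+\g)\equiv\mathbf{0}\ (\textrm{mod }2)$ for all $(\b,\g)\in\CC_d^{(k-1)}$. Throughout, exponents indexed by $I_l$ are read inside $\N^n$ via the embedding introduced before \eqref{cts-eq2}, so that $R^T(\b+\g)$ is always meaningful.

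For the base case $k=1$ I would argue: the edges of $G_{d,l}^{\textrm{tsp}}=G_{d,l,0}^{(0)}$ come from $\A_l\subseteq\A$, so $G_{d,l}^{\textrm{tsp}}$ has the sign-symmetry property; the remaining $G_{d,l,j}^{(0)}$, $j\in J_l$, are empty; hence by the closure property $\CC_d^{(0)}$ enjoys the sign-symmetry property. Then for any $j\in\{0\}\cup J_l$ and any $\{\b,\g\}\in E(F_{d,l,j}^{(1)})$, \eqref{cts-eq4} supplies $(\b',\g')\in\supp(g_j)$ with $(\b,\g)+(\b',\g')\in\CC_d^{(0)}$, so $R^T(\b+\g)+R^T(\b'+\g')\equiv\mathbf{0}$ and thus $R^T(\b+\g)\equiv\mathbf{0}$. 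Finally, $G_{d,l,j}^{(1)}$ is a chordal extension of $F_{d,l,j}^{(1)}$ adding only edges within a connected component, and for a path $\b=\bv_0,\bv_1,\dots,\bv_s=\g$ in $F_{d,l,j}^{(1)}$ one has $R^T(\b+\g)\equiv\sum_{i=0}^{s-1}R^T(\bv_i+\bv_{i+1})\equiv\mathbf{0}\ (\textrm{mod }2)$, since $\sum_i(\bv_i+\bv_{i+1})\equiv\bv_0+\bv_s\ (\textrm{mod }2)$. Hence every $G_{d,l,j}^{(1)}$ has the sign-symmetry property.

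The inductive step is then a verbatim repetition: assuming every $G_{d,l,j}^{(k)}$ has the property, the closure property gives it for $\CC_d^{(k)}$; \eqref{cts-eq4} reduces an edge of $F_{d,l,j}^{(k+1)}$ to an element of $\CC_d^{(k)}$ shifted by some $(\b',\g')\in\supp(g_j)\subseteq\A$, forcing $R^T(\b+\g)\equiv\mathbf{0}$; and the path/telescoping argument absorbs the chordal extension \eqref{cts-eq3}. This closes the induction and in particular proves the statement for $j\in J_l$, $l\in[p]$.

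I do not expect a serious obstacle here: the argument is essentially the one for Theorem \ref{ts-thm3}, and the passage from a single monomial basis to the correlatively sparse collection $\{I_l\}$ is harmless precisely because each $\A_l$ and each $\supp(g_j)$ is contained in $\A$. The only place requiring a little care is the chordal-extension step, where one must make the ``same connected component'' reasoning precise via the telescoping identity above, and one should state once and for all how exponents from different cliques are compared through the embedding into $\N^n$.
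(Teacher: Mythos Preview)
Your proposal is correct and follows exactly the route the paper intends: the paper itself merely says ``by slightly adapting the proof of Theorem~\ref{ts-thm3}'', and your induction on $k$ with the closure property for $\CC_d^{(k-1)}$ together with the telescoping path argument for the chordal-extension step is precisely that adaptation, only spelled out in more detail than the paper does. The one cosmetic slip is the claim $\supp(g_0)\subseteq\A$ (the pair $(\mathbf{0},\mathbf{0})$ need not lie in $\A$), but this is harmless since what you actually use there is $R^T(\mathbf{0}+\mathbf{0})\equiv\mathbf{0}$, which is automatic.
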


If CPOP \eqref{cpop} is a QCQP, then by Proposition \ref{cs-prop}, we have $\rho^{\textrm{cs}}_1=\rho_1$. To ensure any higher order relaxation $(\textrm{Q}^{\textrm{cs-ts}}_{d,k})$ ($d>1$) achieves a better lower bound than Shor's relaxation, we may add an extra first order moment matrix for each variable clique\footnote{Even if CPOP \eqref{cpop} is not a QCQP, this operation could also strengthen the relaxation.}:
\begin{equation}\label{cts-eq7}
(\textrm{Q}^{\textrm{cs-ts}}_{d,k})':\quad
\begin{cases}
\inf&L^{\textrm{c}}_{\y}(f)\\
\textrm{s.t.}&B_{G_{d,l,0}^{(k)}}\circ \M^{\textrm{c}}_d(\y, I_l)\in\Pi_{G_{d,l,0}^{(k)}}(\H_+^{r_{d,l,0}}),\quad l\in[p],\\
&\M^{\textrm{c}}_{1}(\y, I_l)\succeq0,\quad l\in[p],\\
&B_{G_{d,l,j}^{(k)}}\circ \M^{\textrm{c}}_{d-d_j}(g_j\y, I_l)\in\Pi_{G_{d,l,j}^{(k)}}(\H_+^{r_{d,l,j}}),\quad j\in J_l,l\in[p],\\
&L^{\textrm{c}}_{\y}(g_j)\ge0,\quad j\in J',\\
&y_{\mathbf{0},\mathbf{0}}=1.
\end{cases}
\end{equation}

\section{The minimum initial relaxation step of the complex hierarchy}\label{initial}
For CPOP \eqref{cpop}, suppose that $f$ is not homogeneous or the constraint polynomials $g_j,j\in[m]$ have different degrees. Then it might be beneficial to assign different relaxation orders to different subsystems obtained from the correlative sparsity pattern for the initial relaxation step of the complex hierarchy instead of using the uniform minimum relaxation order $d_{\min}$. More specifically, we redefine the csp graph $G^{\textrm{icsp}}(V,E)$ as follows: let $V=[n]$ and $\{i,j\}\in E$ if there exists $(\b,\g)\in\supp(f)\cup\bigcup_{j\in[m]}\supp(g_j)$ such that $\{i,j\}\subseteq\supp(\b)\cup\supp(\g)$. This is clearly a subgraph of $G^{\textrm{csp}}$ defined in Section \ref{cs} and hence typically has a smaller chordal extension. Let $\overline{G}^{\textrm{icsp}}$ be a chordal extension of $G^{\textrm{icsp}}$ and $\{I_l\}_{l\in[p]}$ be the list of maximal cliques of $\overline{G}^{\textrm{icsp}}$ with
$n_l\coloneqq|I_l|$. Now we partition the constraint polynomials $g_j,j\in[m]$ into groups $\{g_j\mid j\in J_l\}_{l\in[p]}$ and $\{g_j\mid j\in J'\}$ which satisfy:
\begin{enumerate}
    \item[(i)] $J_1,\ldots,J_p,J'\subseteq[m]$ are pairwise disjoint and $\bigcup_{l=1}^pJ_l\cup J'=[m]$;
    \item[(ii)] for any $j\in J_l$, $\bigcup_{(\b,\g)\in\supp(g_j)}(\supp(\b)\cup\supp(\g))\subseteq I_l$, $l\in[p]$;
     \item[(iii)] for any $j\in J'$, $\bigcup_{(\b,\g)\in\supp(g_j)}(\supp(\b)\cup\supp(\g))\nsubseteq I_l$ for all $l\in[p]$.
\end{enumerate}

Assume that $f$ decomposes as $f=\sum_{l\in[p]}f_l$ such that $\bigcup_{(\b,\g)\in\supp(f_l)}(\supp(\b)\cup\supp(\g))\subseteq I_l$ for $l\in[p]$. We define the vector of minimum relaxation orders  $\o=(o_l)_l\in\N^{p}$ with $o_l\coloneqq\max(\{d_j:j\in J_l\}\cup\{\lceil\deg(f_l)/2\rceil\})$. Then with $k\ge1$, we consider the following initial relaxation step of the complex hierarchy:
\begin{equation}\label{imom-sos}
(\textrm{Q}^{\textrm{cs-ts}}_{\textrm{min},k}):\quad
\begin{cases}
\inf &L^{\textrm{c}}_{\y}(f)\\
\textrm{s.t.}&B_{G_{o_l,l,0}^{(k)}}\circ \M^{\textrm{c}}_{o_l}(\y, I_l)\in\Pi_{G_{o_l,l,0}^{(k)}}(\H_+^{s_{l,0}}),\quad l\in[p],\\
&\M^{\textrm{c}}_{1}(\y, I_l)\succeq0,\quad l\in[p],\\
&B_{G_{o_l,l,j}^{(k)}}\circ \M^{\textrm{c}}_{o_l-d_j}(g_j\y, I_l)\in\Pi_{G_{o_l,l,j}^{(k)}}(\H_+^{s_{l,j}}),\quad j\in J_l, l\in[p],\\
&L^{\textrm{c}}_{\y}(g_j)\ge0,\quad j\in J',\\
&y_{\mathbf{0},\mathbf{0}}=1,
\end{cases}
\end{equation}
where the sparsity pattern graphs $G_{o_l,l,j}^{(k)},j\in J_l,l\in[p]$ are defined in the same manner as in Section \ref{cs-ts} with $V(G_{o_l,l,j}^{(k)})=\N^{n_l}_{o_l-d_j}$ and $s_{l,j}\coloneqq\binom{n_l+o_l-d_j}{o_l-d_j}$ for all $l,j$.
\begin{remark}
Similarly, we can also define the minimum initial relaxation step for the real hierarchy.
\end{remark}

\section{Numerical experiments}\label{experiments}
In this section, we present numerical results of the proposed sparsity-adapted complex hierarchies for complex polynomial optimization problems. The hierarchies were implemented in the Julia package {\tt TSSOS} \cite{magron2021tssos}, which utilizes the Julia packages {\tt LightGraphs} \cite{graph} to handle graphs, {\tt ChordalGraph} \cite{Wang20} to generate approximately smallest chordal extensions, and {\tt JuMP} \cite{jump} to model the SDP. For the numerical experiments in this paper, {\tt Mosek} \cite{mosek} is used as an SDP solver. {\tt TSSOS} is freely available at

\vspace{2pt}
\centerline{\href{https://github.com/wangjie212/TSSOS}{https://github.com/wangjie212/TSSOS}.}
\vspace{2pt}

All numerical examples were computed on an Intel Core i5-8265U@1.60GHz CPU with 8GB RAM memory. We list the notations that are used in this section in Table \ref{table1}. The running times include the time for pre-processing (to get the block structure), the time for modeling the SDP and the time for solving the SDP. 

\begin{table}[htbp]
\caption{The notation}\label{table1}
\centering
\begin{tabular}{|c|c|}
\hline
$n$&number of complex variables\\
\hline
$d$&relaxation order\\
\hline
$k$&sparse order\\
\hline
mb&maximal size of PSD blocks\\
\hline
opt&optimum\\
\hline
time&running time in seconds\\ 
\hline
gap&optimality gap\\
\hline
-&an out of memory error\\
\hline
\end{tabular}
\end{table}

\subsection{Randomly generated examples}
Given an integer $p\in\N$, we randomly generate a CPOP as follows: 
\begin{enumerate}
    \item Let $f=\sum_{l=1}^pf_l\in\C_{4}[z_{1},\ldots,z_{5(p+1)},\bar{z}_{1},\ldots,\bar{z}_{5(p+1)}]$, where for all $l\in[p]$, $f_l=\bar{f}_l\in\C_{4}[z_{5(l-1)+1},\ldots,z_{5(l-1)+10},\bar{z}_{5(l-1)+1},\ldots,\bar{z}_{5(l-1)+10}]$ is a polynomial with $40$ terms whose real/imaginary parts of coefficients are selected with respect to the uniform probability distribution on $[-1,1]$;
    \item Let us encode multi-ball constraints with $g_l=1-\sum_{i=1}^{10}z_{5(l-1)+i}\bar{z}_{5(l-1)+i}$ for $l\in[p]$;
    \item The CPOP is defined as $\inf_{\z\in\C^{5(p+1)}}\{f(\z,\bar{\z}):g_l(\z,\bar{\z})\ge0,l=1,\ldots,p\}$.
\end{enumerate}
In such a way, we generate $10$ random CPOPs with $p=10,20,\ldots,100$, respectively. We compare the performance of the complex hierarchy with that of the real hierarchy in solving these instances with $d=2,k=1$ and with approximately smallest chordal extensions or the maximal chordal extension. The results are displayed in Table \ref{random2}. The column ``CE" indicates which type of chordal extensions we use: ``min" represents approximately smallest chordal extensions and ``max" represents the maximal chordal extension.


\begin{table}[htbp]
\caption{The complex hierarchy versus the real hierarchy with $d=2,k=1$ for minimizing quartic objective functions on multi-balls}\label{random2}
\centering
\small
\begin{tabular}{|c|c|c|c|c|c|c|c|}
\hline
\multirow{2}*{$n$}&\multirow{2}*{CE}&\multicolumn{3}{c|}{Complex}&\multicolumn{3}{c|}{Real}\\
\cline{3-8}
&&mb&opt&time&mb&opt&time\\
\hline
\multirow{2}*{55}&min&6&-24.6965&0.95&21&-21.2240&9.21\\
\cline{2-8}
&max&36&-24.4543&5.82&-&-&-\\
\hline
\multirow{2}*{105}&min&6&-48.9783&2.45&21&-40.4650&36.5\\
\cline{2-8}
&max&46&-48.8367&16.9&-&-&-\\
\hline
\multirow{2}*{155}&min&6&-69.4493&4.10&21&-57.9840&44.3\\
\cline{2-8}
&max&52&-69.2345&29.3&-&-&-\\
\hline
\multirow{2}*{205}&min&6&-100.132&6.92&21&-82.7737&85.5\\
\cline{2-8}
&max&54&-99.4924&65.4&-&-&-\\
\hline
\multirow{2}*{255}&min&8&-122.621&10.9&21&-102.728&107\\
\cline{2-8}
&max&46&-121.754&81.7&-&-&-\\
\hline
\multirow{2}*{305}&min&8&-151.096&13.7&21&-126.094&134\\
\cline{2-8}
&max&52&-149.406&54.7&-&-&-\\
\hline
\multirow{2}*{355}&min&8&-172.275&18.3&21&-144.936&182\\
\cline{2-8}
&max&52&-170.655&65.2&-&-&-\\
\hline
\multirow{2}*{405}&min&8&-197.036&24.7&21&-163.360&229\\
\cline{2-8}
&max&48&-195.287&79.6&-&-&-\\
\hline
\multirow{2}*{455}&min&8&-224.471&29.3&21&-184.701&278\\
\cline{2-8}
&max&52&-222.837&85.5&-&-&-\\
\hline
\multirow{2}*{505}&min&8&-238.760&35.2&21&-199.774&357\\
\cline{2-8}
&max&46&-237.437&108&-&-&-\\
\hline
\end{tabular}
\end{table}

As we can see from Table \ref{random2}, for the complex hierarchy, the bound with the maximal chordal extension is slightly better than the bound with approximately smallest chordal extensions. Both of them have relative gaps around $20\%$ with respect to the bound given by the real hierarchy with approximately smallest chordal extensions. The gap can be reduced if we increase the sparse order of the complex hierarchy. For instance, for the random CPOP with $p=10$, the complex hierarchy with $d=2,k=1$ has a relative gap of $16.36\%$ and the complex hierarchy with $d=2,k=2$ has a relative gap of $0.91\%$; see Table \ref{random3}.

\begin{table}[htbp]
\caption{The complex hierarchy with different sparse orders. Here, ``gap" is the relative gap with respect to the bound given by the real hierarchy in Table \ref{random2}.}\label{random3}
\centering
\begin{tabular}{|c|c|c|c|c|c|c|}
\hline
$n$&$k$&CE&mb&opt&time&gap\\
\hline
55&1&min&6&-24.6965&0.95&16.36\%\\
\hline
55&2&min&38&-21.4174&13.3&0.91\%\\
\hline
\end{tabular}
\end{table}

On the other hand, when using approximately smallest chordal extensions and with $k=1$, the complex hierarchy is over $9$ times faster then the real hierarchy for each instance. Due to the limitation of memory, the real hierarchy with the maximal chordal extension is unsolvable.

\subsection{AC-OPF instances}
The AC optimal power flow (AC-OPF) is a central problem in power systems, \revision{which aims to minimize the generation cost of an alternating current transmission network under the physical constraints (Kirchhoff’s laws, Ohm’s law, and power balance equations) as well as operational constraints.} Mathematically, it can be formulated as the following CPOP:
\begin{equation}\label{opf}
\begin{cases}
\inf\limits_{V_i,S_s^g}&\sum_{s\in G}(\mathbf{c}_{2s}(\Re(S_{s}^g))^2+\mathbf{c}_{1s}\Re(S_{s}^g)+\mathbf{c}_{0s})\\
\,\textrm{s.t.}&\angle V_r=0,\\
&\mathbf{S}_{s}^{gl}\le S_{s}^{g}\le \mathbf{S}_{s}^{gu},\quad\forall s\in G,\\
&\boldsymbol{\upsilon}_{i}^l\le|V_i|\le \boldsymbol{\upsilon}_{i}^u,\quad\forall i\in N,\\
&\sum_{s\in G_i}S_s^g-\mathbf{S}_i^d-\mathbf{Y}_i^{sh}|V_{i}|^2=\sum_{(i,j)\in E_i\cup E_i^R}S_{ij},\quad\forall i\in N,\\
&S_{ij}=(\mathbf{Y}_{ij}^*-\mathbf{i}\frac{\mathbf{b}_{ij}^c}{2})\frac{|V_i|^2}{|\mathbf{T}_{ij}|^2}-\mathbf{Y}_{ij}^*\frac{V_iV_j^*}{\mathbf{T}_{ij}},\quad\forall (i,j)\in E,\\
&S_{ji}=(\mathbf{Y}_{ij}^*-\mathbf{i}\frac{\mathbf{b}_{ij}^c}{2})|V_j|^2-\mathbf{Y}_{ij}^*\frac{V_i^*V_j}{\mathbf{T}_{ij}^*},\quad\forall (i,j)\in E,\\
&|S_{ij}|\le \mathbf{s}_{ij}^u,\quad\forall (i,j)\in E\cup E^R,\\
&\boldsymbol{\theta}_{ij}^{\Delta l}\le \angle (V_i V_j^*)\le \boldsymbol{\theta}_{ij}^{\Delta u},\quad\forall (i,j)\in E,\\
\end{cases}
\end{equation}
where $V_i$ is the voltage, $S_s^{g}$ is the power generation, $S_{ij}$ is the power flow (all are complex variables; $\Re(\cdot)$ and $\angle\cdot$ stand for the real part and the angle of a complex number, respectively) and all symbols in boldface are constants \revision{($\mathbf{Y}$: admittance, $\boldsymbol{\theta}^{\Delta}$: voltage angle difference limit)}. Notice that $G$ is the collection of generators and $N$ is the collection of buses. For a full description on the AC-OPF problem, the reader may refer to \cite{baba2019} as well as \cite{bienstock2020}. 
We will consider AC-OPF instances for which the following assumption holds.
\vspace{0.5em}

\noindent{\bf AC-OPF Assumption.} There is at most one generator attached to each bus, i.e., $|G_i|\le1$ for all $i\in N$. In this case we assume that a generator $s\in G$ is attached to the bus $i_s\in N$.
\vspace{0.5em}

If {\bf AC-OPF Assumption} holds, then we can eliminate the variables $S_s^g$ from \eqref{opf} to obtain a CPOP involving only the voltage $V_i$:
\begin{equation}\label{opf-simple}
\begin{cases}
\inf\limits_{V_i}&\sum_{s\in G}(\mathbf{c}_{2s}\Re(\mathbf{S}_{i_s}^d+\mathbf{Y}_{i_s}^{sh}|V_{i_s}|^2+\sum_{(i_s,j)\in E_{i_s}\cup E_{i_s}^R}S_{i_sj})^2\\
&\quad\quad\quad+\mathbf{c}_{1s}\Re(\mathbf{S}_{i_s}^d+\mathbf{Y}_{i_s}^{sh}|V_{i_s}|^2+\sum_{(i_s,j)\in E_{i_s}\cup E_{i_s}^R}S_{i_sj})+\mathbf{c}_{0s})\\
\textrm{s.t.}&\angle V_r=0,\\
&\mathbf{S}_{s}^{gl}\le \mathbf{S}_{i_s}^d+\mathbf{Y}_{i_s}^{sh}|V_{i_s}|^2+\sum_{(i_s,j)\in E_{i_s}\cup E_{i_s}^R}S_{i_sj}\le \mathbf{S}_{s}^{gu},\quad\forall s\in G,\\
&\boldsymbol{\upsilon}_{i}^l\le|V_i|\le \boldsymbol{\upsilon}_{i}^u,\quad\forall i\in N,\\
&S_{ij}=(\mathbf{Y}_{ij}^*-\mathbf{i}\frac{\mathbf{b}_{ij}^c}{2})\frac{|V_i|^2}{|\mathbf{T}_{ij}|^2}-\mathbf{Y}_{ij}^*\frac{V_iV_j^*}{\mathbf{T}_{ij}},\quad\forall (i,j)\in E,\\
&S_{ji}=(\mathbf{Y}_{ij}^*-\mathbf{i}\frac{\mathbf{b}_{ij}^c}{2})|V_j|^2-\mathbf{Y}_{ij}^*\frac{V_i^*V_j}{\mathbf{T}_{ij}^*},\quad\forall (i,j)\in E,\\
&|S_{ij}|\le \mathbf{s}_{ij}^u,\quad\forall (i,j)\in E\cup E^R,\\
&\boldsymbol{\theta}_{ij}^{\Delta l}\le \angle (V_i V_j^*)\le \boldsymbol{\theta}_{ij}^{\Delta u},\quad\forall (i,j)\in E.\\
\end{cases}
\end{equation}

Note that in \eqref{opf-simple}, if we substitute the expression of $S_{ij}$ for $S_{ij}$ in $|S_{ij}|\le\mathbf{s}_{ij}^u$, i.e., $S_{ij}\bar{S}_{ij}\le(\mathbf{s}_{ij}^u)^2$, we actually get a quartic constraint. To implement Shor's relaxation for QCQPs, we then relax it to a quadratic constraint by using the trick described in \cite{bienstock2020}. The minimum initial relaxation step of the complex hierarchy $(\textrm{Q}^{\textrm{cs-ts}}_{\textrm{min},1})$ for \eqref{opf-simple} is able to provide a tighter lower bound than Shor's relaxation, which we thereby refer to as the 1.5th order relaxation.

To tackle an AC-OPF instance, we first compute a locally optimal solution with a local solver and then rely on lower bounds obtained from SDP relaxations to certify global optimality. Suppose that the optimum reported by the local solver is AC and the optimum of a certain SDP relaxation is opt. The {\em optimality gap} between the locally optimal solution and the SDP relaxation is defined by
\begin{equation*}
    \textrm{gap}\coloneqq\frac{\textrm{AC}-\textrm{opt}}{\textrm{AC}}\times100\%.
\end{equation*}
If the optimality gap is less than $1\%$, then we accept the locally optimal solution to be globally optimal. 

We select instances satisfying {\bf AC-OPF  Assumption} from the AC-OPF library {\em \href{https://github.com/power-grid-lib/pglib-opf}{PGLiB}} \cite{baba2019}. The number appearing in each instance name stands for the number of buses, which is equal to the number of complex variables involved in \eqref{opf-simple}. 
For these instances, we compute the complex hierarchy as well as the real hierarchy with $k=1$ and with the maximal chordal extension. The results are displayed in Table \ref{ac-opf1}, Table \ref{ac-opf2} and Table \ref{ac-opf3} in which the column ``Order" indicates the relaxation order.

As we can see from the tables, for Shor's relaxation (the 1st order relaxation), the complex hierarchy and the real hierarchy give the same lower bound (up to a given precision) while the complex hierarchy is slightly faster. For the 1.5th order relaxation, the complex hierarchy typically gives a looser bound than the real hierarchy whereas it is faster than the real hierarchy by a factor of $1\sim8$. Shor's relaxation is able to certify global optimality for 19 out of all 36 instances. For the remaining 17 instances, with the 1.5th order relaxation the complex hierarchy is able to certify global optimality for 6 instances and the real hierarchy is able to certify global optimality for 11 instances.

\begin{table}[htbp]
\caption{The results for AC-OPF instances: typical operating conditions}\label{ac-opf1}
\centering
\small
\begin{tabular}{|c|c|c|c|c|c|c|c|c|c|}
\hline
\multirow{2}*{Case}&\multirow{2}*{Order}&\multicolumn{4}{c|}{Complex}&\multicolumn{4}{c|}{Real}\\
\cline{3-10}
&&mb&opt&time&gap&mb&opt&time&gap\\
\hline
14\_ieee&1st&6&$2.1781\text{e}3$&0.07&$0.00\%$&6&$2.1781\text{e}3$&0.09&$0.00\%$\\
\hline
\multirow{2}*{30\_ieee}&1st&8&$7.5472\text{e}3$&0.12&$8.06\%$&8&$7.5472\text{e}3$&0.15&$8.06\%$\\
\cline{2-10}
&1.5th&12&$8.2073\text{e}3$&0.66&$0.02\%$&22&$8.2085\text{e}3$&0.97&$0.00\%$\\
\hline
\multirow{2}*{39\_epri}&1st&8&$1.3565\text{e}4$&0.17&$2.00\%$&8&$1.3565\text{e}4$&0.22&$2.00\%$\\
\cline{2-10}
&1.5th&14&$1.3765\text{e}4$&1.08&$0.55\%$&25&$1.3842\text{e}4$&1.12&$0.00\%$\\
\hline
57\_ieee&1st&12&$3.7588\text{e}4$&0.27&0.00\%&12&$3.7588\text{e}4$&0.32&0.00\%\\
\hline
\multirow{2}*{89\_pegase}&1st&24&1.0670e5&0.72&0.55\%&24&1.0670e5&0.74&0.55\%\\
\cline{2-10}
&1.5th&96&$1.0709\text{e}5$&263&0.19\%&184&$1.0715\text{e}5$&1232&0.13\%\\
\hline
\multirow{2}*{118\_ieee}&1st&10&9.6900e4&0.49&0.32\%&10&9.6901e4&0.57&0.32\%\\
\cline{2-10}
&1.5th&20&$9.7199\textrm{e}4$&5.22&0.02\%&37&9.7214e4&8.78&0.00\%\\
\hline
\multirow{3}*{162\_ieee\_dtc}&1st&28&$1.0164\text{e}5$&1.49&$5.96\%$&28&$1.0164\text{e}5$&1.51&$5.96\%$\\
\cline{2-10}
&1.5th&40&$1.0249\text{e}5$&17.1&$5.17\%$&74&$1.0645\text{e}5$&87.5&$1.51\%$\\
\cline{2-10}
&2nd&146&-&-&-&282&-&-&-\\
\hline
\multirow{2}*{179\_goc}&1st&10&7.5016e5&0.72&0.55\%&10&7.5016e5&0.77&0.55\%\\
\cline{2-10}
&1.5th&20&$7.5078\text{e}5$&6.77&0.46\%&37&$7.5382\text{e}5$&10.6&0.06\%\\
\hline
\multirow{2}*{300\_ieee}&1st&14&5.5424e5&1.41&1.94\%&16&5.5424e5&1.49&1.94\%	\\
\cline{2-10}
&1.5th&22&$5.6455\text{e}5$&19.1&0.12\%&40&$5.6522\text{e}5$&27.3&0.00\%\\
\hline
\multirow{2}*{1354\_pegase}&1st&26&1.2172e6&10.9&3.30\%&26&1.2172e6&13.1&3.30\%\\
\cline{2-10}
&1.5th&26&$1.2304\text{e}6$&255&2.29\%&49&1.2514e6&392&0.59\%\\
\hline
2383wp\_k&1st&48&$1.8620\text{e}6$&36.2&0.33\%&50&1.8617e6&43.4&0.35\%\\
\hline
\multirow{2}*{2869\_pegase}&1st&26&2.4387e6&47.2&0.98\%&26&2.4388e6&67.3&0.97\%\\
\cline{2-10}
&1.5th&98&$2.4586\text{e}6$&1666&0.17\%&191&-&-&-\\
\hline
\end{tabular}
\end{table}

\begin{table}[htbp]
\caption{The results for AC-OPF instances: congested operating conditions }\label{ac-opf2}
\centering
\small
\begin{tabular}{|c|c|c|c|c|c|c|c|c|c|}
\hline
\multirow{2}*{Case}&\multirow{2}*{Order}&\multicolumn{4}{c|}{Complex}&\multicolumn{4}{c|}{Real}\\
\cline{3-10}
&&mb&opt&time&gap&mb&opt&time&gap\\
\hline
\multirow{2}*{14\_ieee}&1st&6&5.6886e3&0.06&$5.18\%$&6&5.6886e3&0.07&$5.18\%$\\
\cline{2-10}
&1.5th&14&5.9981e3&0.39&0.02\%&22&5.9994e3&0.52&0.00\%\\
\hline
\multirow{2}*{30\_ieee}&1st&8&1.7253e4&0.13&$4.40\%$&8&1.7253e4&0.14&$4.40\%$\\
\cline{2-10}
&1.5th&12&1.7941e4&0.78&$0.59\%$&22&1.8044e4&1.09&$0.00\%$\\
\hline
\multirow{2}*{39\_epri}&1st&8&2.4523e5&0.19&$1.78\%$&8&2.4523e5&0.21&$1.78\%$\\
\cline{2-10}
&1.5th&14&2.4707e5&1.08&$1.04\%$&25&2.4966e5&1.70&$0.00\%$\\
\hline
57\_ieee&1st&12&4.9289e5&0.22&0.00\%&12&4.9289e5&0.29&0.00\%\\
\hline
\multirow{3}*{89\_pegase}&1st&24&1.0052e5&0.68&22.78\%&24&1.0052e5&0.75&22.78\%\\
\cline{2-10}
&1.5th&96&$1.0145\text{e}5$&196&22.06\%&184&$1.0322\text{e}5$&1448&20.71\%\\
\cline{2-10}
&2nd&224&-&-&-&426&-&-&-\\
\hline
\multirow{3}*{118\_ieee}&1st&10&1.9375e5&0.58&20.02\%&10&1.9375e5&0.65&20.02\%\\
\cline{2-10}
&1.5th&20&$2.0193\textrm{e}5$&6.15&16.64\%&37&2.2286e5&8.58&9.00\%\\
\cline{2-10}
&2nd&48&$2.2318\textrm{e}5$&125&7.87\%&92&-&-&-\\
\hline
\multirow{3}*{162\_ieee\_dtc}&1st&28&$1.1206\text{e}5$&1.67&$7.38\%$&28&$1.1206\text{e}5$&1.71&$7.38\%$\\
\cline{2-10}
&1.5th&40&$1.1284\text{e}5$&22.5&$6.74\%$&74&$1.1955\text{e}5$&81.4&$1.19\%$\\
\cline{2-10}
&2nd&146&-&-&-&282&-&-&-\\
\hline
\multirow{2}*{179\_goc}&1st&10&1.7224e6&0.77&10.85\%&10&1.7224e6&0.78&10.85\%\\
\cline{2-10}
&1.5th&20&$1.8438\text{e}6$&7.75&4.57\%&37&$1.9226\text{e}6$&8.57&0.48\%\\
\hline
\multirow{2}*{300\_ieee}&1st&14&6.7932e5&1.22&0.83\%&16&6.7932e5&1.69&0.83\%	\\
\cline{2-10}
&1.5th&22&$6.8411\text{e}5$&19.4&0.13\%&40&$6.8493\text{e}5$&25.2&0.01\%\\
\hline
\multirow{2}*{1354\_pegase}&1st&26&1.4871e6&9.55&0.75\%&26&1.4872e6&12.7&0.74\%\\
\cline{2-10}
&1.5th&26&$1.4929\text{e}6$&265&0.36\%&49&1.4929e6&379&0.36\%\\
\hline
2383wp\_k&1st&48&2.7913e5&36.0&0.00\%&50&2.7913e5&39.3&0.00\%\\
\hline
\multirow{2}*{2869\_pegase}&1st&26&2.9059e6&44.8&0.81\%&26&2.9059e6&67.8&0.81\%\\
\cline{2-10}
&1.5th&98&$2.9262\text{e}6$&1918&0.12\%&191&-&-&-\\
\hline
\end{tabular}
\end{table}

\begin{table}[htbp]
\caption{The results for AC-OPF instances: small angle difference conditions}\label{ac-opf3}
\centering
\small
\begin{tabular}{|c|c|c|c|c|c|c|c|c|c|}
\hline
\multirow{2}*{Case}&\multirow{2}*{Order}&\multicolumn{4}{c|}{Complex}&\multicolumn{4}{c|}{Real}\\
\cline{3-10}
&&mb&opt&time&gap&mb&opt&time&gap\\
\hline
14\_ieee&1st&6&$2.7743\text{e}3$&0.10&$0.09\%$&6&$2.7743\text{e}3$&0.10&$0.09\%$\\
\hline
\multirow{2}*{30\_ieee}&1st&8&$7.5472\text{e}3$&0.16&$8.06\%$&8&$7.5472\text{e}3$&0.17&$8.06\%$\\
\cline{2-10}
&1.5th&12&$8.2072\text{e}3$&0.83&$0.02\%$&22&$8.2085\text{e}3$&0.94&$0.00\%$\\
\hline
\multirow{2}*{39\_epri}&1st&8&$1.4791\text{e}4$&0.14&$0.29\%$&8&$1.4791\text{e}4$&0.18&$0.29\%$\\
\cline{2-10}
&1.5th&14&$1.4831\text{e}4$&0.91&$0.02\%$&25&$1.4832\text{e}4$&1.16&$0.01\%$\\
\hline
57\_ieee&1st&12&$3.8646\text{e}4$&0.23&0.04\%&12&$3.8646\text{e}4$&0.32&0.04\%\\
\hline
\multirow{2}*{89\_pegase}&1st&24&1.0672e5&0.61&0.54\%&24&1.0672e5&0.75&0.54\%\\
\cline{2-10}
&1.5th&96&$1.0700\text{e}5$&159&0.27\%&184&$1.0713\text{e}5$&1201&0.15\%\\
\hline
\multirow{3}*{118\_ieee}&1st&10&1.0191e5&0.48&3.10\%&10&1.0191e5&0.56&3.10\%\\
\cline{2-10}
&1.5th&20&$1.0239\textrm{e}5$&5.52&2.64\%&37&1.0336e5&8.56&1.71\%\\
\cline{2-10}
&2nd&48&$1.0401\textrm{e}5$&120&1.10\%&92&-&-&-\\
\hline
\multirow{3}*{162\_ieee\_dtc}&1st&28&$1.0283\text{e}5$&1.39&$5.39\%$&28&$1.0283\text{e}5$&1.57&$5.39\%$\\
\cline{2-10}
&1.5th&40&$1.0434\text{e}5$&20.6&$4.00\%$&74&$1.0740\text{e}5$&69.1&$1.19\%$\\
\cline{2-10}
&2nd&146&-&-&-&282&-&-&-\\
\hline
\multirow{2}*{179\_goc}&1st&10&7.5261e5&0.75&1.30\%&10&7.5261e5&1.12&1.30\%\\
\cline{2-10}
&1.5th&20&$7.5361\text{e}5$&7.43&1.17\%&37&$7.5583\text{e}5$&7.49&0.88\%\\
\hline
\multirow{2}*{300\_ieee}&1st&14&5.6162e5&1.25&0.72\%&16&5.6168e5&1.81&0.71\%	\\
\cline{2-10}
&1.5th&22&$5.6557\text{e}5$&20.9&0.02\%&40&$5.6572\text{e}5$&22.8&0.00\%\\
\hline
\multirow{2}*{1354\_pegase}&1st&26&1.2172e6&10.4&3.30\%&26&1.2172e6&13.1&3.30\%\\
\cline{2-10}
&1.5th&26&$1.2358\text{e}6$&259&1.83\%&49&1.2586e6&260&0.02\%\\
\hline
2383wp\_k&1st&48&$1.9060\text{e}6$&35.5&0.27\%&50&1.9061e6&48.6&0.27\%\\
\hline
\multirow{2}*{2869\_pegase}&1st&26&2.4488e6&40.2&0.81\%&26&2.4490e6&57.1&0.80\%\\
\cline{2-10}
&1.5th&96&$2.4495\text{e}6$&1879&0.78\%&191&-&-&-\\
\hline
\end{tabular}
\end{table}

\section{Conclusions}\label{cons}
In this paper, we have studied the sparsity-adapted complex hierarchy for complex polynomial optimization problems by taking into account both correlative and term sparsity. Numerical experiments demonstrate that the complex hierarchy offers a trade-off between the computational efficiency and the quality of obtained bounds relative to the real hierarchy. We hope that the sparsity-adapted complex hierarchy could help to tackle large-scale CPOPs arising either from the academic literature or from practical industrial problems.

~

\paragraph{\textbf{Acknowledgements}.} 
Both authors were supported by the Tremplin ERC Stg Grant ANR-18-ERC2-0004-01 (T-COPS project).
The second author was supported by the FMJH Program PGMO (EPICS project), as well as the PEPS2 Program (FastOPF project) funded by AMIES and RTE.
This work has benefited from  the European Union's Horizon 2020 research and innovation program under the Marie Sklodowska-Curie Actions, grant agreement 813211 (POEMA) as well as from the AI Interdisciplinary Institute ANITI funding, through the French ``Investing for the Future PIA3'' program under the Grant agreement n$^{\circ}$ANR-19-PI3A-0004.

\bibliographystyle{abbrv}
\bibliography{refer}
\end{document}